\theoremstyle{plain}
\newtheorem{theorem}{Theorem}[section]
\newtheorem{lemma}[theorem]{Lemma}
\newtheorem{prop}[theorem]{Proposition}
\newtheorem{coro}[theorem]{Corollary}
\theoremstyle{remark}
\newtheorem{remark}{Remark}
\newcommand\nc\newcommand
\nc{\bs}{\boldsymbol}
\nc{\PP}{\mathbb{P}}
\newcommand{\R}{\mathbb R}
\nc{\ind}{\mathbbm{1}}
\newcommand{\E}{\mathbb E}
\newcommand{\Var}{\mathrm{Var}}
\newcommand{\tr}{\mathrm{tr}}
\newcommand{\comment}[1]{}
\DeclarePairedDelimiter{\abs}{\lvert}{\rvert}
\DeclarePairedDelimiter\norm{\lVert}{\rVert}
\title{On measures strongly log-concave on a subspace}
\author{Pierre Bizeul }
\date{}
\begin{document}
\maketitle
\section{Introduction}
Let $ V:\R^n\mapsto \R$ be a $C^2$ convex function, such that $d\mu(x) = e^{-V(x)}dx$ is a log-concave probability measure. It is well-known that if $\mu$ is $t$-strongly log-concave, that is $V$ satisfies the Bakry-Émery condition :  \begin{equation}{\label{eq:criterion}}
    \nabla^2 V\geq t
\end{equation} for some $t>0$, where $\nabla^2$ stands for the Hessian, it has good isoperimetric properties. In particular, its Poincaré constant is at most $\frac{1}{t}$. Recall that the measure $\mu$ is said to satisfy a Poincaré inequality with constant $c$ if for all locally Lipschitz function $f$ we have :

$$ Var_\mu(f) \leq c^2\,\E_\mu(\abs{\nabla f}^2) $$
where here and in the sequel, $\abs{.}$ stands for the Euclidean norm. The best such constant is denoted by $c_P(\mu)$, the Poincaré constant of $\mu$. The KLS conjecture \cite{KLS1995} proposes that when $\mu$ is log-concave, its Poincaré constant is, up to a universal constant, less than the operator norm of its covariance matrix. Since Poincaré inequalities are homogeneous, we can state the conjecture only for normalized measures, without loss of generality. A measure $\mu$ is called isotropic if it is centered and its covariance matrix is the identity. Introduce
$$ \Psi_n = \sup_{\substack{\mu}} \ c_P(\mu), $$
where the supremum runs over all isotropic log-concave measures of $\R^n$. The KLS conjecture then reads :
$$\Psi_n \leq c $$
for some universal constant $c>0$. 

A related property of strongly log-concave probabilities is that they exhibit good concentration function. Recall that the concentration function of a measure $\mu$ is the function $\alpha_{\mu} : \R^+\mapsto \left[0,1/2\right]$ defined by :
$$\alpha_\mu(r) = \sup_{\{S,\, \mu(S) = 1/2\}} \mu(S_r^c) $$
where $S_r = \{x\in\R^n \, / \, d(x,S) \leq r\}$ and $d(x,S)$ is the Euclidean distance between $x$ and $S$. It follows from the Prékopa-Leindler inequality that if $\mu$ is $t$-strongly log-concave, then for all measurable sets $S$, 
\begin{equation}\label{gaussian_concentration_general}
    \mu(S_r^c) \leq \frac{1}{\mu(S)}\exp\left(-\frac{tr^2}{4}\right).
\end{equation}
In particular, it has a Gaussian-type concentration function :
\begin{equation}\label{eq:gaussian_concentration_function}
    \alpha_\mu(r) \leq 2\exp\left(-\frac{tr^2}{4}\right),
\end{equation}
see for instance \cite{Eldan2013} Proposition 2.6 and its proof. It was first observed by Gromov and Milman (\cite{gromilman} see also \cite{ledoux2001concentration} Corollary 3.2 for a better constant) that a Poincaré inequality implies exponential concentration, that is :
\begin{equation}\label{eq:grom_exp}
    \alpha_\mu(r) \leq \exp\left({-\frac{r}{3c_P(\mu)}}\right).
\end{equation}
The converse implication has been established in the log-concave case by E.Milman \cite{Milman2009} where he shows that when $\mu$ is log-concave, 
\begin{equation}\label{eq:milman_exp}
    c_P(\mu) \lesssim \alpha_\mu^{-1}(1/4)
\end{equation}
where for two expressions $a,b$ depending on parameters, $a\lesssim b$ means there is a universal constant $c>0$ such that $a \leq cb$. We also write $a\simeq b$ when $a\lesssim b$ and $b\lesssim a$.

In an attempt to tackle the KLS conjecture, Eldan \cite{Eldan2013} introduced a stochastic process, known as stochastic localization, which, roughly, decomposes, for all time $t\geq0$, a log-concave measure $\mu$ into an average of measures $\mu_t(\omega)$ which are $t$-strongly log-concave. This strategy enabled Eldan to relate the KLS conjecture to the a priori weaker Variance conjecture, then Lee and Vempala \cite{leevamp} to obtain the then better bound on $\Psi_n$ : $\Psi_n\lesssim n^{1/4}$. Recently, Chen obtained that $\Psi_n = o(n^\alpha)$ for every $\alpha>0$, \cite{Chen2021}, and very recently, Klartag and Lehec obtained $\Psi_n = O(\log(n)^{5})$ \cite{klarleh}.

In this note, we propose a slight generalization of the criterion (\ref{eq:criterion}) allowing the potential to be flat in some directions. The observation is that stochastic localization behaves well when restrained to a subspace.

Our main result is the following :
\begin{theorem}\label{thm:main} Let $V:\R^n\mapsto\R$ be a $C^2$ convex potential such that $d\mu(x) = e^{-V(x)}dx$ is a probability measure. Suppose that there is $1\leq k \leq n$, a subspace $E$ of codimension $k$ and $\eta>0$ such that 
$$\nabla^2V \geq \eta P_E $$
where $P_E$ is the orthogonal projector onto E. Let $K$ be the covariance matrix of $\mu$. Define $Q = P_{E^\perp} KP_{E^\perp}$
\begin{enumerate}[(i)]
    \item $ c_P(\mu) \lesssim \max\left(\frac{1}{\sqrt{\eta}}\,,\, \norm{Q}_{op}^{1/2}\Psi_k\sqrt{\max(\log(k),1)}\right)$
    \item There is a universal constant $c>0$ such that for every $A$ such that $\mu(A) = \frac{1}{2}$, 
    
        $$\mu(A_r^c)\lesssim \exp\left(-c\min\left(\frac{r}{\norm{Q}_{op}^{1/2}}\,,\, r^2\min\left(\eta,\frac{1}{\Psi_k^2\max(\log(k),1)\norm{Q}_{op}}\right)\right)\right) $$
\end{enumerate}
\end{theorem}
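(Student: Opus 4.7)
The plan is to adapt Eldan's stochastic localization so that only the $k$ directions of $E^\perp$ carry noise. Let $(B_t)_{t\geq 0}$ be a standard Brownian motion on $E^\perp$ and define $\mu_t(dx) = F_t(x)\mu(dx)$ with $F_t$ the martingale tilt $\exp(\langle P_{E^\perp}x, W_t\rangle - \tfrac{t}{2}|P_{E^\perp}x|^2)$, where $W_t = B_t + \int_0^t P_{E^\perp}a_s\,ds$ and $a_t = \int x\,d\mu_t$. Since the tilt only adds $\tfrac{t}{2}|P_{E^\perp}x|^2$ to $V$, one gets $\nabla^2(-\log\mu_t)\geq \eta P_E + tP_{E^\perp}\geq \min(\eta,t)\Id$, so $\mu_t$ is $\min(\eta,t)$-strongly log-concave and enjoys a Poincaré inequality with constant $1/\min(\eta,t)$.

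For part (i), I apply Itô to $t\mapsto \E[\Var_{\mu_t}(f)]$ to get the standard decomposition
\[\Var_\mu(f) = \E[\Var_{\mu_t}(f)] + \int_0^t \E\langle \nabla g_s, K_s P_{E^\perp} K_s\nabla g_s\rangle\,ds,\]
where $g_s = \E_{\mu_s}[f]$ and $K_s$ is the covariance of $\mu_s$. The first summand is at most $\tfrac{1}{\min(\eta,t)}\E_\mu|\nabla f|^2$ by the Poincaré inequality on $\mu_t$, and the second is at most $t\sup_{s\leq t}\|P_{E^\perp}K_sP_{E^\perp}\|_{op}\,\E_\mu|\nabla f|^2$. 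Choosing $t = \min(\eta, T)$ for a suitable $T$ then reduces the problem to controlling the restricted covariance $Q_t := P_{E^\perp}K_tP_{E^\perp}$ on $[0,T]$.

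The core step is therefore to show that $\|Q_t\|_{op} \lesssim \|Q\|_{op}$ up to time $T\asymp 1/(\Psi_k^2\log(k)\,\|Q\|_{op})$. The key observation is that the pushforward $(P_{E^\perp})_{\#}\mu_t$ is a $k$-dimensional log-concave measure with covariance $Q_t$, so by definition of $\Psi_k$ it has Poincaré constant at most $\Psi_k\|Q_t\|_{op}^{1/2}$. Combined with the matrix SDE $dK_t = -K_tP_{E^\perp}K_t\,dt + (\text{martingale})$, which on the $E^\perp$ block reduces to $dQ_t = -Q_t^2\,dt + (\text{martingale})$, the log-determinant argument of Chen and Klartag--Lehec can be run in effective dimension $k$ to yield $\|Q_t\|_{op}\leq 2\|Q\|_{op}$ with high probability throughout $[0,T]$, which is exactly where the $\sqrt{\log k}$ (rather than $\sqrt{\log n}$) factor appears. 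The main technical obstacle is here: one has to check that the coupling of the $E$ and $E^\perp$ blocks of $K_t$ does not destroy the effectively $k$-dimensional nature of the log-determinant bootstrap.

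For part (ii), the same localization is used to bound exponential moments. For $f$ 1-Lipschitz, split $f(X) - \E_\mu f = (f(X) - \E_{\mu_T}f) + (\E_{\mu_T}f - \E_\mu f)$: by \eqref{gaussian_concentration_general} applied to the $\min(\eta,T)$-strongly log-concave $\mu_T$, the first summand has a Gaussian tail at scale $1/\sqrt{\min(\eta,T)}$, while the second is the terminal value of the martingale $s\mapsto \E_{\mu_s}f$ whose bracket is controlled by $\int_0^T\|P_{E^\perp}K_s\|_{op}^2\,ds \lesssim \|Q\|_{op}T$ via the core estimate of the previous paragraph. A Bernstein-type bound on the sum of a sub-Gaussian and a martingale with bounded bracket then produces exactly the two regimes in the statement: the Gaussian tail $\exp(-cr^2\min(\eta, 1/(\Psi_k^2\log(k)\|Q\|_{op})))$ at small scales, and the exponential crossover $\exp(-cr/\|Q\|_{op}^{1/2})$ at large scales.
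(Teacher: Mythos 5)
Your skeleton is the paper's: put noise only along $E^\perp$, note that $\mu_t$ is $\min(\eta,t)$-strongly log-concave, observe that $Q_t=P_{E^\perp}K_tP_{E^\perp}$ is the covariance of the $k$-dimensional log-concave marginal (so its Poincar\'e constant is at most $\Psi_k\norm{Q_t}_{op}^{1/2}$), and run a covariance bootstrap up to $T\simeq 1/(\Psi_k^2\max(\log k,1)\norm{Q}_{op})$. The coupling issue you flag is resolved exactly as you hope: since $P_{E^\perp}$ is a projector, $P_{E^\perp}K_tP_{E^\perp}K_tP_{E^\perp}=Q_t^2$, and in fact the marginal itself solves the standard $k$-dimensional localization SDE, so the bootstrap (done in the paper with Eldan's potential $\tr(Q_t^p)$, $p=\max(\log k,1)$, and Freedman's inequality, rather than a log-determinant argument) is genuinely $k$-dimensional.

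The genuine gap is in your deviation step for (ii). The martingale $\E_{\mu_s}f$ has differential $\langle P_{E^\perp}\,\mathrm{Cov}_{\mu_s}(f,x),dB_s\rangle$, so its bracket is $\abs{P_{E^\perp}\mathrm{Cov}_{\mu_s}(f,x)}^2\,ds\le \Var_{\mu_s}(f)\,\norm{P_{E^\perp}K_sP_{E^\perp}}_{op}\,ds$; it is not $\norm{P_{E^\perp}K_s}_{op}^2\,ds$ (there is no identity $\mathrm{Cov}_{\mu_s}(f,x)=K_s\E_{\mu_s}\nabla f$ outside the Gaussian case), and in any event $\norm{P_{E^\perp}K_s}_{op}$ involves the cross block $P_{E^\perp}K_sP_E$, which the core estimate does not control at all --- only the compressed block $Q_s$ is controlled. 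Worse, for $1$-Lipschitz $f$ the factor $\Var_{\mu_s}(f)$ at small times can only be bounded through the Poincar\'e constant of $\mu$ itself, which is circular. The paper avoids this by working with sets: $s_t=\mu_t(S)$ is a bounded martingale and Cauchy--Schwarz against the indicator gives $d[s]_t\le\norm{P K_tP}_{op}dt$ (Lemma \ref{lem:expansion}); one then splits on $\{\mu_t(S)\ge 1/4\}$, applies \eqref{gaussian_concentration_general} to the $\min(\eta,t)$-strongly log-concave $\mu_t$, and handles the bad event with Freedman's inequality and Lemma \ref{lem:control_cov}. The same circularity infects your direct route to (i): the second term of the variance decomposition is $\int_0^t\E\bigl[\Var_{\mu_s}(f)\,\norm{Q_s}_{op}\bigr]ds$ (your expression $\E\langle\nabla g_s,K_sP_{E^\perp}K_s\nabla g_s\rangle$ with $g_s=\E_{\mu_s}[f]$ a scalar is not meaningful), and bounding it by $t\sup_s\norm{Q_s}_{op}\,\E_\mu\abs{\nabla f}^2$ is not available; one must instead absorb it using $\E\Var_{\mu_s}(f)\le\Var_\mu(f)$ with $T\norm{Q}_{op}\le 1/2$ and treat the event where $\norm{Q_s}_{op}$ is large, since the covariance control is only with high probability. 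The paper sidesteps this entirely by deducing (i) from (ii) via E. Milman's theorem \eqref{eq:milman_exp}.
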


In the particular case $E=\{0\}$, inequality \textit{(ii)} implies a new bound for the concentration function of log-concave measures, which we state, without loss of generality, in the isotropic case.

\begin{coro}\label{coro:concentration} For any isotropic log-concave measure $\mu$ and any $r>0$, we have 

$$\alpha_\mu(r) \lesssim \exp\left(-c\min\left(r,\frac{r^2}{\psi_n^2\log(n)}\right)\right) $$
\end{coro}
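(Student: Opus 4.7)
The plan is to derive this corollary as a direct specialization of Theorem~\ref{thm:main}(ii), choosing the degenerate subspace $E=\{0\}$. With this choice the codimension is $k=n$, the projector $P_E$ vanishes, and the curvature hypothesis $\nabla^2 V \geq \eta P_E$ is automatically satisfied by every convex $V$ for every $\eta>0$, so we may send $\eta \to +\infty$ in the conclusion. Moreover $P_{E^\perp} = \Id$, so $Q = K$ is the full covariance of $\mu$; isotropy gives $K = \Id$ and therefore $\norm{Q}_{op} = 1$.

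Inserting these values into the bound of Theorem~\ref{thm:main}(ii), the first argument of the outer minimum reduces to $r$, and the inner minimum $\min(\eta,\,\cdot\,)$ collapses to $\tfrac{1}{\Psi_n^{2}\,\max(\log(n),1)}$ as $\eta\to\infty$. Thus for every measurable $A$ with $\mu(A) = 1/2$,
\[
\mu(A_r^c) \;\lesssim\; \exp\!\left(-c\,\min\!\left(r,\ \frac{r^{2}}{\Psi_n^{2}\,\max(\log(n),1)}\right)\right).
\]
Taking the supremum over such sets $A$ is the definition of $\alpha_\mu(r)$, and $\max(\log(n),1)$ can be replaced by $\log(n)$ for $n\geq 2$ by absorbing the one-dimensional case into the universal constant. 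This gives the stated bound.

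The only potential subtlety is the passage $\eta\to\infty$. If the proof of Theorem~\ref{thm:main} requires a finite $\eta$, a standard regularization handles this: replace $V$ by $V_\delta := V + \tfrac{\delta}{2}\abs{x}^{2}$, which is genuinely $\delta$-strongly log-concave and satisfies $\nabla^2 V_\delta \geq \eta P_E + \delta \Id \geq \eta P_E$ for any $\eta$. Applying the theorem to $\mu_\delta \propto e^{-V_\delta}$ and letting $\delta \downarrow 0$ (using the lower semicontinuity of the concentration function under weak convergence) recovers the claim with no loss of constants. Apart from this bookkeeping point, the corollary is a pure substitution, with no genuine obstacle beyond Theorem~\ref{thm:main} itself.
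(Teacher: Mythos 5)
Your proposal is correct and is essentially the paper's own argument: the corollary is obtained by specializing Theorem~\ref{thm:main}(ii) to $E=\{0\}$, so that $k=n$, $P_{E^\perp}=\Id$, $Q=K=\Id$ by isotropy, and the curvature hypothesis holds vacuously for every $\eta>0$, which makes the inner minimum collapse to $\frac{1}{\Psi_n^2\max(\log(n),1)}$. Your closing regularization step is harmless but not needed, since with $E=\{0\}$ the hypothesis is genuinely satisfied for each finite $\eta$ and the left-hand side does not depend on $\eta$, so one may simply apply the theorem with $\eta$ large (any $\eta\geq \frac{1}{\Psi_n^2\max(\log(n),1)}$ already suffices).
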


\begin{remark}
Note that \textit{(ii)} implies \textit{(i)}. Indeed, choosing $r = c'\,\max\left(\frac{1}{\sqrt{\eta}}\,,\,\norm{Q}_{op}^{1/2}\Psi_k\sqrt{\max(\log(k),1)}\right)$, for an appropriate choice of constant $c'>0$, we get that $\mu(A_r^c) \leq \frac{1}{4}$. By (\ref{eq:milman_exp}), this implies \textit{(i)}. On the other hand it is easy to check that the exponential concentration obtained by combining \textit{(i)} with (\ref{eq:grom_exp}) is weaker than \textit{(ii)}.
\end{remark}
\begin{remark}
The idea of evaluating concentration functions with stochastic localization already appears in the work of Lee and Vempala (\cite{leevamp}, Theorem 16). To improve the Paouris deviation inequality for the Euclidean norm (\cite{Paouris2006}),  they develop a more refined analysis of the process, using the so-called  Stieltjes potential. They prove that for any $L$-Lipschitz function $g$, and any isotropic log-concave probability measure $\mu$ one has :
\begin{equation}\label{eq:lip-concentration}
    \forall t\geq 0 \quad \PP(\vert g(X) - \Bar{g}(X)\vert \geq Lt) \leq \exp\left(-\frac{ct^2}{t+\sqrt{n}}\right)
\end{equation}
where $X\sim\mu$ and $\Bar{g}(X)$ is the median or mean of $g(X)$. Notice that when $g$ is the Euclidean norm, then by Borell's Lemma \cite{borell}, $\E_\mu(\abs{x}) \simeq \E_\mu(\abs{x}^2)^{1/2} = \sqrt{n}$ since $\mu$ is isotropic. Plugging this into \eqref{eq:lip-concentration} yields
\begin{equation}\label{eq:paouris}
    \forall t\geq 0 \quad \PP(\abs{X} \geq t\sqrt n) \leq \exp(-c\min(t,t^2)\sqrt n)
\end{equation}

However, thanks to the new estimate of Chen, $\Psi_n = o(n^\alpha)$ for every $\alpha>0$, we can obtain this result directly from Corollary \ref{coro:concentration}. Indeed, for a general isotropic log-concave probability measure $\mu$ it asserts that for all measurable $A$ such that $\mu(A)=1/2$ and all $r>0$,

$$\mu(A_r^c) \lesssim \exp\left(-c\min\left(r,\frac{r^2}{\psi_n^2\log(n)}\right)\right). $$

Let $g$ be a $L$-Lipschitz function, and let  $A = \{x\in\R^n, \ g(x) \leq \Bar{g}(X) \}$, by definition of the median, $\mu(A)=1/2$. Now set $G_r = \{ x\in\R^n, \ g(x) \leq \Bar{g}(X) + Lr\}$, then because $g$ is $L$-Lipschitz, $ A_r \subset G_r$, where $A_r$ is the $r$-extension of $A$. We get that 
$$\mu(G_r^c) \leq \mu(A_r^c) \lesssim \exp\left(-c\min\left(r,\frac{r^2}{\psi_n^2\log(n)}\right)\right), $$

For the Euclidean norm, which is $1$-Lipschitz, this yields
$$\PP(\abs{X} \geq r\sqrt{n}) \lesssim \exp\left(-c\min\left(r\sqrt{n},\frac{r^2n}{\psi_n^2\log(n)}\right)\right) \lesssim \exp\left(-c\min(r,r^2)\sqrt{n}\right)$$
where we used the fact that $\psi_n^2\log(n) = o(\sqrt n)$ thanks to Chen's estimate. Notice that using the Lee-Vampala estimate $\psi_n^2 = O(\sqrt n)$ would lead to an extra logarithmic factor in the deviation estimate whose removal was the object of their work with the Stieltjes potential.
\end{remark}
\begin{lemma}
It is enough to prove Theorem \ref{thm:main} when $Q = I_k$.
\end{lemma}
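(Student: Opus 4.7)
I would reduce to $Q = I_k$ via an affine change of variables that simultaneously normalizes the $E^\perp$-covariance and rescales the $E$-block so that the exponents in (i)--(ii) transform exactly. After reducing dimension to assume $Q$ is positive definite on $E^\perp$ (a null direction of $Q$ in $E^\perp$ corresponds to $\mu$ being supported on a lower-dimensional affine subspace, and we may project), I would set $\sigma := \norm{Q}_{op}^{1/2}$ and let $T:\R^n\to\R^n$ act as $Q^{-1/2}$ on $E^\perp$ and as $\sigma^{-1}\Id$ on $E$. Setting $\tilde\mu = T_\#\mu$, a short calculation gives $P_{E^\perp}(T K T^\top) P_{E^\perp} = Q^{-1/2} Q Q^{-1/2} = I_k$, and the chain rule applied to $\tilde V = V\circ T^{-1}$ shows $\nabla^2\tilde V \geq \sigma^2\eta\, P_E$, so that $\tilde\mu$ satisfies the theorem's hypotheses with $\tilde Q = I_k$ and $\tilde\eta := \eta\norm{Q}_{op}$.

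I would then apply the $Q = I_k$ case of Theorem~\ref{thm:main} to $\tilde\mu$ and transfer the conclusions back. For (i), the standard pushforward bound for Poincar\'e constants gives $c_P(\mu) \leq \norm{T^{-1}}_{op}\, c_P(\tilde\mu) = \sigma\, c_P(\tilde\mu)$, and substituting $c_P(\tilde\mu) \lesssim \max(1/\sqrt{\tilde\eta}, \Psi_k\sqrt{\max(\log(k),1)})$ with $\tilde\eta = \sigma^2\eta$ collapses exactly to $\max(1/\sqrt{\eta}, \sigma\Psi_k\sqrt{\max(\log(k),1)})$, the bound in (i). For (ii), the smallest singular value of $T$ is $\sigma^{-1}$, attained both on the top eigenspace of $Q$ and on $E$, so $|Tx - Tx'| \geq \sigma^{-1}|x - x'|$; hence for any $A$ with $\mu(A) = 1/2$, setting $\tilde A := T(A)$ and $\tilde r := r/\sigma$ gives $T(A_r^c) \subseteq \tilde A_{\tilde r}^c$ and therefore $\mu(A_r^c) \leq \tilde\mu(\tilde A_{\tilde r}^c)$. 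Plugging $\tilde r$ and $\tilde\eta$ into the $Q = I_k$ version of (ii) produces the exponent $\min(r/\sigma,\, r^2\min(\eta, 1/(\Psi_k^2\log(k)\norm{Q}_{op})))$ of (ii) on the nose.

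The conceptually important — and only subtle — point is the rescaling of the $E$-block by $\sigma^{-1}$. The more obvious choice $T(y,z) = (Q^{-1/2}y, z)$ normalizes $Q$ but leaves $\tilde\eta = \eta$, and pushing the resulting bound back to $\mu$ then gives $\max(\sigma/\sqrt{\eta}, \sigma\Psi_k\sqrt{\log k})$, which is off by a factor $\sigma$ in the first term of (i) (and correspondingly weakens (ii)). The extra $\sigma^{-1}$ factor on $E$ is precisely what boosts $\tilde\eta$ by $\sigma^2$, which is then exactly cancelled by the Lipschitz loss $\norm{T^{-1}}_{op} = \sigma$ when transferring back. Identifying this correct transformation is the main obstacle; once it is in hand, the rest is a chain-rule check of the Hessian bound and an elementary application of the pushforward inequalities for the Poincar\'e constant and the concentration function.
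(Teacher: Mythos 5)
Your proposal is correct and is essentially the paper's own argument: your map $T$ is exactly the inverse of the paper's block matrix $S$ (acting as $\norm{Q}_{op}^{1/2}\Id$ on $E$ and $Q^{1/2}$ on $E^\perp$), yielding the same normalized measure with $\tilde Q = I_k$, $\tilde\eta = \norm{Q}_{op}\eta$, and the same transfer back via the Lipschitz bounds $\norm{S}_{op}=\norm{Q}_{op}^{1/2}$ for the Poincar\'e constant and the concentration function. The only difference is that you spell out the transfer of both (i) and (ii) explicitly, while the paper transfers only the concentration function (its (ii) implies (i) anyway); also note that under the paper's hypothesis ($V$ finite and $C^2$ on all of $\R^n$) the covariance is positive definite, so the degenerate-$Q$ case you mention does not actually arise.
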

\begin{proof}
Let $d\mu(x) = e^{-V(x)}dx$ be a measure satisfying the hypothesis of Theorem \ref{thm:main} and let $X$ be a random vector whose law is $\mu$. Set $S = \begin{bmatrix}
\norm{Q}_{op}^{1/2}I_{n-k} & 0\\
0 & Q^{1/2} 
\end{bmatrix}$ where the matrix is expressed in a basis adapted to the splitting $\R^n = E \oplus E^\perp$. Define the random vector $\tilde{X} = S^{-1}X$, whose law is $d\tilde{\mu}(x) = e^{-\tilde{V}(x)}dx = \vert \det S\vert \ e^{-V(Sx)}dx$ and covariance matrix  $$\tilde{K} = S^{-1}KS^{-1}.$$ For a symmetric $n\times n$ matrix $M$, we denote by $\lambda_1(M)\geq \dots\geq\lambda_n(M)$ its ordered eigenvalues. It is classical and easy to check that for every $r>0$ one has
\begin{equation}\label{eq:renormalisation}
    \alpha_{\mu}(r) \leq \alpha_{\tilde\mu}\left(\frac{r}{\lambda_1(S)}\right) = \alpha_{\tilde\mu}\left(\frac{r}{\norm{Q}_{op}^{1/2}}\right)
\end{equation}
However, with this choice of $S$, $\tilde{\mu}$ satisfies :

$$\lambda_{n-k}(P_E\nabla^2\tilde{V}P_E) \geq \tilde{\eta} = \norm{Q}_{op}\eta \quad \text{and}\quad \tilde Q = P_{E^\perp}\tilde KP_{E^\perp} = I_k. $$
We can then apply Theorem \ref{thm:main} to $\tilde{\mu}$ which, combined with (\ref{eq:renormalisation}), yields the result.
\end{proof}

We conclude this introduction with a classical inequality, which essentially goes back to Freedman \cite{freedman1975tail}, that we will use for controlling deviation of martingales in the sequel.

\begin{lemma}\label{lem:bernstein}
Let $M_t$ be a continuous local martingale starting from 0.
$$\forall T >0 \quad \PP(M_T \geq a \ , \ [M]_T\leq b) \leq   \exp(-\frac{a^2}{2b})$$
\end{lemma}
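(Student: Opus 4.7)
The plan is to use the classical exponential martingale trick of Novikov/Freedman type. Fix $\lambda > 0$ and introduce the process
$$Z_t = \exp\!\left(\lambda M_t - \tfrac{\lambda^2}{2}[M]_t\right).$$
Since $M_t$ is a continuous local martingale with quadratic variation $[M]_t$, Itô's formula gives $dZ_t = \lambda Z_t\, dM_t$, so $Z_t$ is a nonnegative local martingale starting at $1$; by Fatou's lemma it is a supermartingale, hence $\E[Z_\tau] \leq 1$ at any bounded stopping time $\tau$.

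The second step is to localize the bracket so that the event $\{[M]_T \leq b\}$ can be exploited. Define $\tau_b = \inf\{t \geq 0 : [M]_t \geq b\} \wedge T$, a bounded stopping time. Apply the supermartingale property to $Z_{\tau_b}$ to get $\E[Z_{\tau_b}] \leq 1$. On the event $A = \{M_T \geq a,\ [M]_T \leq b\}$ one has $\tau_b = T$, so $M_{\tau_b} = M_T \geq a$ and $[M]_{\tau_b} = [M]_T \leq b$; therefore
$$Z_{\tau_b} \geq \exp\!\left(\lambda a - \tfrac{\lambda^2}{2} b\right) \text{ on } A.$$

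Markov's inequality then yields
$$\PP(A) \leq \exp\!\left(-\lambda a + \tfrac{\lambda^2}{2} b\right) \E[Z_{\tau_b}] \leq \exp\!\left(-\lambda a + \tfrac{\lambda^2}{2} b\right).$$
Optimizing in $\lambda > 0$ with the choice $\lambda = a/b$ gives the claimed bound $\exp(-a^2/(2b))$.

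The only mildly delicate point is making sure $Z_t$ genuinely has expectation $\leq 1$ at $\tau_b$: this is where continuity of $M$ and the localization by $[M]$ matter, because then $M_{t \wedge \tau_b}$ has bounded bracket, which in turn controls $Z_{t \wedge \tau_b}$ well enough for the Fatou/supermartingale argument. Aside from that bookkeeping, the proof is a one-line Chernoff bound once the exponential supermartingale is set up.
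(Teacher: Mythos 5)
Your proof is correct and follows essentially the same route as the paper: the exponential supermartingale $\exp\left(\lambda M_t - \tfrac{\lambda^2}{2}[M]_t\right)$, stopping when the bracket reaches $b$, Markov's inequality, and the choice $\lambda = a/b$. The only cosmetic difference is that you apply optional stopping at $\tau_b$ directly, whereas the paper first proves the bound assuming $[M]_T \leq b$ almost surely and then passes to the stopped martingale --- the same argument in a different order.
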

\begin{proof}
For all $\lambda \in \R$, define the process $\mathcal{E}(\lambda M)$ by 
$$\mathcal{E}(\lambda M)_t  = \exp\left(\lambda M_t - \frac{\lambda^2}{2}[M]_t\right).$$
Elementary Itô calculus shows that $\mathcal{E}(\lambda M)$ is a local martingale. Moreover, it is positive, so by Fatou's lemma it is a supermartingale. In particular, for all $t\geq0$, $\E \ \mathcal{E}(\lambda M)_t \leq \mathcal{E}(\lambda M)_0 = 1$, that is :
$$ \forall t \geq 0, \quad\E \exp\left(\lambda M_t -\frac{\lambda^2}{2}[M]_t\ \right) \leq 1$$

Now, assume that  $[M]_T \leq b$ almost surely. Then,

\begin{align*}
    \PP(M_T \geq a ) &= \PP(  \mathcal{E}(\lambda M)_t \geq  e^{\lambda a - \frac{\lambda^2}{2}[M]_T})\\
    &\leq \PP(  \mathcal{E}(\lambda M)_t \geq  e^{\lambda a - \frac{\lambda^2}{2}b})\\
    &\leq \E( \mathcal{E}(\lambda M)_t)\ e^{\frac{\lambda^2}{2}b-\lambda a} \\
    &\leq e^{\frac{\lambda^2}{2}b-\lambda a}\\
\end{align*}
Choosing the optimal $\lambda = \frac{a}{b}$ yields :
$$ \PP(M_T \geq a ) \leq e^{-\frac{a^2}{2b}}.$$

The proof follows from applying this argument to the local martingale $M_t^\tau = M_{t\wedge \tau}$, where $\tau = \inf \{t \geq 0, [M]_t \geq b\}$ is a stopping time. Indeed, remark that $[M_{t\wedge \tau}]_t \leq b$ almost surely, and that 

$$\PP(M_T \geq a \ , \ [M]_T\leq b) \leq \PP(M_T^\tau \geq a) \leq e^{-\frac{a^2}{2b}}.$$ 

\end{proof}
\section{Restricted stochastic localization}

Let $\mu$ be a log-concave measure satisfying the hypothesis of Theorem \ref{thm:main} with $Q=I_k$. We denote by $P:\R^n\mapsto\R^n$ the orthogonal projection onto the $k$-dimensional subspace $E^\perp$. In the following we work in an orthonormal basis such that this subspace is spanned by the $k$ first basis vectors. Let $f$ be the density of $\mu$, for all $x\in\R^n$, consider the following stochastic differential equations :

\begin{equation}\label{eq:process}
    df_t(x) = (x-a_t)^TPdB_tf_t(x) \quad ; \quad f_0(x) = f(x)
\end{equation}
where $a_t = \int_{\R^n} xf_t(x)dx$ is the barycenter of the measure $\mu_t$, which we define here as having density $f_t$, and $(B_t)_{t\geq0}$ is a standard Brownian motion on $\R^n$.

This system of equation is the same as the usual stochastic localization, except for the addition of the matrix $P$ which projects the random direction given by the Brownian onto the subspace where we need to bend the potential. The idea of adding a projector first appears in a paper of Klartag \cite{Klartag2018} for other purposes. The following facts and computations are very standard, and we refer the reader to \cite{Eldan2013} and \cite{leevamp} for a more detailed exposition. In particular, we need to assume that the support of $\mu$ is bounded to grant the existence and well-definedness of the process for all time $t\geq0$ and then extend the result to arbitrary $\mu$ by approximation; we again refer to \cite{Eldan2013}.

\begin{prop} 
\begin{itemize}
    \item Equation (\ref{eq:process}) defines a function-valued martingale $f_t$ in the sense that for any continuous and compactly supported function $\phi$ :
    
    \begin{equation}\label{eq:martingale}
   \int_{\R^n}\phi(x) f_t(x)dx \quad \text{is a martingale}
    \end{equation}
    \item $f_t$ is a density and for all $x\in\R^n$,
    \begin{equation}\label{eq:ft_expression}
        f_t(x) = \frac{1}{Z_t}e^{-\frac{t}{2}x^TPx + c_t\cdot x}f(x):=e^{-V_t(x)} 
    \end{equation}
    where $c_t$ is the solution of :
    \begin{equation}\label{eq:c_t}
            c_0 = 0, \quad dc_t = PdB_t + Pa_tdt
    \end{equation}
    in particular we see that $\nabla^2 V_t \geq \min(\eta,t)Id$
\end{itemize}
\end{prop}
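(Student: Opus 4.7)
The plan is to treat (\ref{eq:ft_expression}) as an ansatz and verify, via Itô's formula, that once $c_t$ is chosen according to (\ref{eq:c_t}) the resulting $f_t$ satisfies the SDE (\ref{eq:process}); the martingale, density and Hessian bounds then drop out with little further work. Throughout I assume that $\mu$ has bounded support so that the coefficients of all relevant SDEs are Lipschitz on the support and the stochastic integrals are bona fide martingales; the general case is recovered by approximation as in \cite{Eldan2013}.

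Concretely, I would set $Y_t(x) = \exp\bigl(-\tfrac{t}{2} x^T P x + c_t \cdot x\bigr) f(x)$ and $Z_t = \int_{\R^n} Y_t(y)\,dy$, so the ansatz reads $f_t = Y_t / Z_t$. Applying Itô's formula to the exponent, the Itô correction $\tfrac{1}{2} \abs{Px}^2\,dt = \tfrac{1}{2} x^T P x\,dt$ cancels the $-\tfrac{1}{2} x^T P x\,dt$ coming from the time derivative of $-\tfrac{t}{2} x^T P x$ (using $P^2 = P$), leaving
$$\frac{dY_t(x)}{Y_t(x)} = x^T P\, dB_t + x^T P a_t\, dt.$$
Integrating in $x$ and noting that $\int x\, Y_t(x)/Z_t\,dx = a_t$ gives $dZ_t/Z_t = a_t^T P\, dB_t + a_t^T P a_t\, dt$. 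The quotient rule for $f_t = Y_t Z_t^{-1}$ then produces $dt$ terms that all cancel: the drift of $Z_t^{-1}$ vanishes because $\abs{Pa_t}^2 = a_t^T P a_t$ (again using $P^2 = P$), and the two remaining $x^T P a_t\,dt$ contributions — one from the drift of $Y_t$, one from the cross-variation of $Y_t$ with $Z_t^{-1}$ — cancel each other, leaving exactly $df_t = f_t (x - a_t)^T P\, dB_t$, which is (\ref{eq:process}). Since $f_0 = f$ and $Z_0 = 1$, pathwise uniqueness identifies this explicit expression with the solution of (\ref{eq:process}).

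The remaining assertions follow cheaply. Positivity of $f_t$ is manifest from the exponential form, and the martingale property (\ref{eq:martingale}) holds because (\ref{eq:process}) has no drift, so for compactly supported $\phi$ the process $\int \phi(x) f_t(x)\,dx$ is a bounded local martingale, hence a true martingale; applying this to $\phi \equiv 1$ shows $\int f_t\,dx \equiv 1$, so $f_t$ is a probability density. For the Hessian bound, reading off (\ref{eq:ft_expression}) gives $V_t(x) = V(x) + \tfrac{t}{2} x^T P x - c_t \cdot x + \log Z_t$, so $\nabla^2 V_t = \nabla^2 V + t P_{E^\perp}$; combined with the hypothesis $\nabla^2 V \geq \eta P_E$ this yields $\nabla^2 V_t \geq \eta P_E + t P_{E^\perp} \geq \min(\eta, t)\, \Id$. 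I do not anticipate a real obstacle: the whole argument is a routine Itô verification, and the only delicate point is that both cancellations in the quotient rule rest on $P$ being an orthogonal projector.
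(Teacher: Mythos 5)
Your verification is correct, but it runs in the opposite direction from the paper's proof: you take the explicit form \eqref{eq:ft_expression} as an ansatz, apply It\^o's formula to $Y_t(x)=e^{-\frac{t}{2}x^TPx+c_t\cdot x}f(x)$ and to $Z_t^{-1}$, and check (using $P^2=P=P^T$ exactly as you note) that $f_t=Y_t/Z_t$ satisfies the SDE \eqref{eq:process}; the paper instead starts from \eqref{eq:process}, computes $d\log f_t(x)$ to read off the form \eqref{eq:ft_expression}, and handles the total mass separately by showing $m_t=\int f_t$ solves $dm_t=(Pa_t(m_t-1))\cdot dB_t$, whose unique solution is $m_t\equiv 1$. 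The paper even flags your route as the one it chose \emph{not} to write out, and your version is essentially the construction suggested in its remark: define $c_t$ by the finite-dimensional SDE \eqref{eq:c_t} (with $a_t$ viewed as a function of $c_t$), then define $f_t$ by \eqref{eq:ft_expression}. What your approach buys is that normalization is automatic ($Z_t$ is the normalizing constant, so $\int f_t\,dx=1$ by construction, making your later step with $\phi\equiv 1$ redundant --- and note $\phi\equiv 1$ is not compactly supported, though bounded support lets you replace it by a cutoff), and existence of a solution to \eqref{eq:process} comes for free by verification; what the paper's route buys is a shorter exposition that shows what form any solution must take. The one point you should phrase more carefully is the appeal to ``pathwise uniqueness'' for \eqref{eq:process}: this is an infinite system of SDEs and its uniqueness is not established (nor needed --- the cleaner statement, as in the paper's remark, is that the process is \emph{defined} through \eqref{eq:c_t} and \eqref{eq:ft_expression}, with well-posedness of the finite-dimensional equation \eqref{eq:c_t} justified under bounded support as in \cite{Eldan2013}). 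Your martingale argument (no drift in $d\int\phi f_t$, boundedness by $\norm{\phi}_\infty$ since $\int f_t=1$) and the Hessian computation $\nabla^2V_t=\nabla^2V+tP_{E^\perp}\geq \eta P_E+tP_{E^\perp}\geq\min(\eta,t)\,\Id$ are both correct and match the paper's conclusions.
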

\begin{proof}
For the existence and well-definedness of the process, see the remark below. While it is possible to check that $f_t$ as defined by \eqref{eq:ft_expression} satisfy \eqref{eq:process}, we sketch a different proof to lighten the exposition. Let $m_t = \int_{\R^n}f_t(x)dx$ be the total mass at time $t$. Recall that$a_t = \frac{1}{m_t}\int_{\R^n}xf_t(x)dx$ is the barycenter of $f_t$. Then, by \eqref{eq:process},
\begin{align*}
    dm_t &= \left(P\int_{\R^n}(x-a_t)f_t(x)dx\right).dB_t\\
    &= \left(Pa_t(m_t-1)\right).dB_t.
\end{align*}
It is easy to check that this simple stochastic differential equation admits a unique solution (see \cite{oksendal2003stochastic} §5.2). It is given by $m_t=1$. To establish \eqref{eq:ft_expression}, we use \eqref{eq:process} to compute:
\begin{align*}
    d\log f_t(x) &= \frac{df_t(x)}{f_t(x)}-\frac{1}{2}\frac{d[f(x)]_t}{f_t(x)^2}\\
    &= \left(P(x-a_t)\right)\cdot dB_t -\frac{1}{2}(x-a_t)^TP(x-a_t)dt\\
    &= x\cdot\left(PdB_t+Pa_tdt\right) - \frac{1}{2}x^TPx dt +dz_t\\
    &= x\cdot dc_t - \frac{1}{2}x^TPx dt + dz_t
\end{align*}
where $dz_t$ regroups the terms that do not depend on $x$. It encodes the normalizing factor $Z_t$. The expression \eqref{eq:ft_expression} together with the proof that $m_t=1$ ensures that $f_t$ is a density. The martingale property \eqref{eq:martingale} is straightforward since, for any $\phi$ compactly supported,
$$d\int_{\R^n}\phi(x) f_t(x)dx = \left(\int_{\R^n}\phi(x)P(x-a_t) f_t(x)dx\right)\cdot dB_t. $$
Finally, the lower-bound on the Hessian of $V_t$ is a direct consequence of \eqref{eq:ft_expression}.

\end{proof}

\begin{remark}
Equation \eqref{eq:process} defines an infinite system of stochastic differential equations. It is therefore a priori unclear whether a solution exists. However there is a simpler, although arguably less intuitive, way of defining the process. First notice that, given the initial data $f$, $a_t$ is but a function of $t$ and $c_t$ defined as the barycenter of the density $f_t$ \eqref{eq:ft_expression}. Hence, we can first define $c_t$ by equation (\ref{eq:c_t}) and then $f_t$ by equation (\ref{eq:ft_expression}), and only then compute $df_t(x)$.
\end{remark}

The next two lemmas are standard and straightforward computations in stochastic localization which are obtained using Itô calculus. See \cite{Eldan2013} and (\cite{leevamp},  Lemma 20). We denote by $K_t$ the covariance matrix of $\mu_t$.  Since the computation for its infinitesimal change $dK_t$ is a bit tedious, we omit it to lighten the exposition.

\begin{lemma}
$da_t = K_tPdB_t$
\end{lemma}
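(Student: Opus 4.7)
The key observation is that $a_t = \int_{\R^n} x f_t(x)\,dx$ depends linearly on $f_t$, so when I apply Itô's formula no quadratic-variation correction appears: differentiation under the integral sign simply gives
$$da_t = \int_{\R^n} x\, df_t(x)\, dx.$$
I would then substitute the defining SDE \eqref{eq:process} for $df_t$. Since the Brownian increment $dB_t$ does not depend on the spatial variable $x$, it can be pulled out of the spatial integral, yielding
$$da_t = \left(\int_{\R^n} x (x - a_t)^T f_t(x)\, dx\right) P\, dB_t.$$

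The next step is to recognize the matrix in parentheses as the covariance $K_t$. This is the standard centering trick: since $a_t$ is by definition the barycenter of $f_t$, we have $\int_{\R^n} (x - a_t) f_t(x)\, dx = 0$, so subtracting $a_t (x-a_t)^T$ under the integral changes nothing, and
$$\int_{\R^n} x (x - a_t)^T f_t(x)\, dx = \int_{\R^n} (x - a_t)(x - a_t)^T f_t(x)\, dx = K_t.$$
Combining the two displays gives $da_t = K_t P\, dB_t$, which is the claim.

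The only technically delicate point is the exchange of the stochastic differential and the $x$-integration in the first step. Under the standing assumption (inherited from the preceding discussion) that $\mu$ has bounded support, the density $f_t$ is compactly supported uniformly in $t$ and all moments of $\mu_t$ are finite, so a stochastic Fubini theorem applies without issue; the case of general $\mu$ is then handled by the same approximation argument referenced in \cite{Eldan2013}. Beyond that, everything is a direct manipulation, which is why the paper defers the analogous (but heavier) computation for $dK_t$ and only states this lemma.
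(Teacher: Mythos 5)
Your proof is correct and follows essentially the same route as the paper: differentiate under the integral, plug in \eqref{eq:process}, pull out $P\,dB_t$, and use that the centering term $\int (x-a_t)f_t\,dx$ vanishes to identify the integral with $K_t$. The extra remark on justifying the stochastic Fubini step via bounded support matches the paper's standing assumption and adds nothing problematic.
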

\begin{proof}
By Itô calculus and \eqref{eq:process},
\begin{align*}
    da_t = d\int_{\R^n}xf_t(x)dx &=\int_{\R^n}x(x-a_t)^TPf_t(x)dx\ dB_t\\
    &=\int_{\R^n}x(x-a_t)^Tf_t(x)dx\ PdB_t\\
    &=\int_{\R^n}(x-a_t)(x-a_t)^Tf_t(x)dx\ PdB_t = K_tPdB_t
\end{align*}
\end{proof}

\begin{lemma}
$dK_t = \int_{\R^n} (x-a_t)(x-a_t)^TP(x-a_t)^TdB_tf_t(x)dx - K_tPK_tdt$
\end{lemma}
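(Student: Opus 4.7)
The plan is to compute $dK_t$ via Itô's formula applied to the $t$-dependent integrand $(x-a_t)(x-a_t)^T f_t(x)$ and then integrate in $x$. Writing $Y_t := x - a_t$ (with $x$ held fixed) and $G_t(x) := Y_t Y_t^T$, the previous lemma gives $dY_t = -K_t P\, dB_t$, from which one obtains
$$ dG_t(x) = -Y_t\, dB_t^T P K_t \;-\; K_t P\, dB_t\, Y_t^T \;+\; K_t P K_t\, dt, $$
where the drift term comes from the quadratic covariation $d[a_{t,i}, a_{t,j}] = (K_t P K_t)_{ij}\, dt$ after using $P^2 = P$ and the symmetry of $K_t$.

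The next step is to apply Itô's product rule to $G_t(x) f_t(x)$,
$$ d\bigl(G_t(x) f_t(x)\bigr) = (dG_t(x))\, f_t(x) + G_t(x)\, df_t(x) + d[G(x), f(x)]_t. $$
The cross-variation is computed from the martingale parts of each factor, namely $-Y_t(K_t P dB_t)^T - (K_t P dB_t)Y_t^T$ on one side and $f_t(x)\,Y_t^T P dB_t$ on the other; collapsing a pair of $P$'s via $P^2 = P$ yields
$$ d[G(x), f(x)]_t = -\bigl[\, Y_t Y_t^T P K_t + K_t P Y_t Y_t^T \,\bigr] f_t(x)\, dt. $$

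Finally, I would integrate each of the three contributions in $x$. The two stochastic pieces coming from $(dG_t) f_t$ are each proportional to $\int Y_t f_t(x)\,dx = 0$, so only the deterministic $+K_t P K_t\, dt$ survives there. The term $G_t\, df_t$ integrates to the stochastic integral $\int (x-a_t)(x-a_t)^T (x-a_t)^T P\, dB_t\, f_t(x)\,dx$. The cross-variation piece integrates, via $\int Y_t Y_t^T f_t\,dx = K_t$, to $-2K_t P K_t\, dt$. Adding $K_t P K_t\, dt - 2K_t P K_t\, dt = -K_t P K_t\, dt$ then yields the claimed identity; a quick symmetry check ($K_t$ and $P$ symmetric) confirms that both sides are genuinely matrix-valued of the same symmetry type. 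The only non-routine bookkeeping is the cross-variation computation, where one must keep the projector $P$ straight and repeatedly invoke $P^2 = P$ to avoid spurious factors; apart from this, the derivation is a direct adaptation of the unprojected computation in \cite{Eldan2013}.
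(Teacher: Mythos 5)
Your computation is correct: the It\^o product rule on $(x-a_t)(x-a_t)^T f_t(x)$, with $da_t=K_tPdB_t$, $d[a_t]=K_tPK_t\,dt$ (using $P^2=P$) and the cross-variation contributing $-2K_tPK_t\,dt$ after integration, yields exactly the stated drift $-K_tPK_t\,dt$ and the martingale term $\int (x-a_t)(x-a_t)^T\,(x-a_t)^TP\,dB_t\,f_t(x)dx$, which is the statement's term once $P(x-a_t)^TdB_t$ is read as the scalar $\bigl(P(x-a_t)\bigr)^TdB_t$. The paper omits this computation (deferring to Eldan and Lee--Vempala, Lemma 20), and your argument is precisely that standard derivation, so there is nothing to flag.
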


Now  we  want  to  have  an  estimate  of  the  concentration function of $\mu$. We first need to understand how the measure of a set evolves along the process.

\begin{lemma}\label{lem:expansion}
Let $S\subset\R^n$ be a measurable set and define $s_t = \mu_t(S)$, then :

$$d[s]_t \leq (\norm{PK_tP}_{op}) dt $$

\end{lemma}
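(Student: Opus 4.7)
The idea is to express $s_t$ as a stochastic integral against $B_t$ using \eqref{eq:process}, read off its quadratic variation, and bound the resulting integrand by a covariance Cauchy--Schwarz inequality.

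First, $s_t = \int_S f_t(x)\,dx$ is, by approximating $\ind_S$ through continuous compactly supported $\phi$ in \eqref{eq:martingale}, a local martingale. Differentiating under the integral sign using the dynamics \eqref{eq:process},
\begin{align*}
ds_t = \int_S (x-a_t)^T P\, dB_t\, f_t(x)\,dx = \left(P\int_S (x-a_t)\, f_t(x)\,dx\right)\cdot dB_t,
\end{align*}
so its quadratic variation equals
\begin{align*}
d[s]_t = \left|\, P \int_S (x - a_t)\, f_t(x)\,dx \,\right|^2 dt.
\end{align*}

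Second, I test this vector against an arbitrary unit $u\in\R^n$. Because $X - a_t$ is centered under $\mu_t$, the inner product can be rewritten as a covariance:
\begin{align*}
\left\langle u,\ P\!\int_S (x-a_t)\, f_t(x)\,dx\right\rangle = \int_{\R^n} (Pu)\cdot(x-a_t)\, \ind_S(x)\, f_t(x)\,dx = \cov_{\mu_t}\!\bigl((Pu)\cdot X,\ \ind_S(X)\bigr).
\end{align*}
The Cauchy--Schwarz inequality for covariance, together with $\var_{\mu_t}(\ind_S) = s_t(1-s_t) \leq 1$, then yields
\begin{align*}
\left|\left\langle u,\ P\!\int_S (x-a_t)\, f_t(x)\,dx\right\rangle\right|^2 \leq \var_{\mu_t}\!\bigl((Pu)\cdot X\bigr) = u^T P K_t P u \leq \norm{PK_tP}_{op}.
\end{align*}
Taking the supremum over unit $u \in \R^n$ produces $d[s]_t \leq \norm{PK_tP}_{op}\, dt$, as claimed.

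There is no real difficulty here once the SDE for $f_t$ is in hand: the computation is essentially automatic. The only mild technicality is the approximation argument legitimizing the differentiation under the integral for the non-smooth indicator $\ind_S$, but this is standard and already handled by the paper's assumption that $\mu$ has bounded support (so that $f_t$ has finite moments and $\ind_S$ can be approximated in $L^2(f_t\,dx)$ by continuous compactly supported functions).
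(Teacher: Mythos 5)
Your proof is correct and follows essentially the same route as the paper: both compute $ds_t$ as a stochastic integral against $dB_t$ from \eqref{eq:process} and bound the quadratic variation by Cauchy--Schwarz to arrive at $\norm{PK_tP}_{op}$. The only cosmetic difference is that you apply Cauchy--Schwarz in covariance form (picking up the harmless factor $s_t(1-s_t)\leq 1$), whereas the paper applies it to the integral restricted to $S$ and uses $\int_S f_t \leq 1$.
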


\begin{proof}

\begin{align*}
    ds_t = \int_S df_t(x)dx = \langle\int_S P(x-a_t)f_t(x)dx , dB_t \rangle
\end{align*}
So the quadratic variation is
\begin{align*}
    d[s]_t &= \max_{ \abs{\xi}\leq1} \left(\int_S \xi^TP(x-a_t)f_t(x)dx \right)^2 dt\\
    &\leq \max_{ \abs{\xi}\leq1}\left( \int_S \left(\xi^TP(x-a_t)\right)^2f_t(x)dx\right) \left(\int_S f_t(x)dx\right) dt \\
    & \leq \max_{\abs{\xi}\leq1} (\xi^TPK_tP\xi) dt\leq (\norm{PK_tP}_{op}) dt
\end{align*}
\end{proof}

To control the above quadratic variation, we need to control the norm of $Q_t = PK_tP$. This is the purpose of the next section. 

\section{Control of the covariance matrix}

 We will see that the matrix $Q_t$, seen as a $k\times k$ matrix, follows the same dynamics as the covariance matrix of the standard stochastic localization in $\R^k$. To be more precise, it is the covariance matrix of the marginal density, which follows a stochastic localization dynamics. Hence, to control its operator norm, we use the same strategy as Eldan.

\begin{lemma} Define $g_t(y) = \int_{\R^{n-k}} f_t(y,x)dx$ the marginal density of the vector $Y_t=PX_t$, where $X_t$ is the random vector with density $f_t$. The barycenter of $Y_t$ is $b_t = Pa_t$. Then,  
\begin{equation}\label{eq:dgt_LV}
    dg_t(y) = (y-b_t)^TdW_tg_t(y)
\end{equation}
where $W_t$ is a standard Brownian in $\R^k$. Moreover $Q_t$ is the covariance matrix of $Y_t$ and 
\begin{equation}\label{eq:cov_dynamics}
    dQ_t = \int_{\R^k} (y-b_t)(y-b_t)^T(y-b_t)^TdW_tg_t(y)\ dy - Q_t^2dt
\end{equation}
\end{lemma}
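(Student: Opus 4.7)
The plan is to use the fact that the matrix $P$ projects onto the first $k$ coordinates, so that the noise term in \eqref{eq:process} only involves the first $k$ components of $B_t$ and is independent of the $E$-coordinates of $x$. This will allow direct integration to obtain the SDE for the marginal, after which the covariance dynamics follow from a routine Itô computation identical in spirit to the one in standard stochastic localization.

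Write $x = (y,z)$ with $y \in \R^k$ the coordinates on $E^\perp$ and $z \in \R^{n-k}$ those on $E$, and split $B_t = (W_t, B_t')$ accordingly, so $W_t$ is a standard Brownian motion in $\R^k$. Since $P$ kills the last $n-k$ coordinates, $P(x-a_t) = (y - b_t, 0)$ and equation \eqref{eq:process} rewrites as
\[
df_t(y,z) = (y-b_t)^T\, dW_t\; f_t(y,z).
\]
The crucial observation is that the coefficient $(y-b_t)^T dW_t$ does not depend on $z$, so integrating both sides over $z \in \R^{n-k}$ (and using Fubini for stochastic integrals, justified by the boundedness-of-support reduction mentioned in the previous section) yields $dg_t(y) = (y-b_t)^T\, dW_t\, g_t(y)$, which is \eqref{eq:dgt_LV}.

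For the identification of $Q_t$ as the covariance of $Y_t = PX_t$, simply note that $P K_t P = \E_{\mu_t}[P(X_t-a_t)(X_t-a_t)^T P] = \E[(Y_t - b_t)(Y_t - b_t)^T]$, viewed as a $k \times k$ matrix on $E^\perp$; in particular its mean satisfies $b_t = Pa_t$ as claimed. For the dynamics of $Q_t$, the cleanest route is to work with $M_t = \int_{\R^k} y y^T g_t(y)\,dy$ so that $Q_t = M_t - b_t b_t^T$. Applying \eqref{eq:dgt_LV} gives
\[
dM_t = \int_{\R^k} y\, y^T (y-b_t)^T\, dW_t\; g_t(y)\,dy,
\]
and, since $\int y(y-b_t)^T g_t(y)\,dy = Q_t$ (the affine shift integrates to zero),
\[
db_t = \int_{\R^k} y\, (y-b_t)^T\, dW_t\; g_t(y)\,dy = Q_t\, dW_t,
\]
which has quadratic covariation $d[b,b^T]_t = Q_t^2\, dt$.

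Putting these together via Itô,
\[
dQ_t = dM_t - (db_t) b_t^T - b_t (db_t)^T - Q_t^2\, dt,
\]
and one only has to observe that subtracting the $(db_t)b_t^T + b_t(db_t)^T$ contribution from $dM_t$ exactly completes the square $y y^T \to (y-b_t)(y-b_t)^T$ under the stochastic integral (the extra $b_t b_t^T$ term vanishes because $\int (y-b_t)^T g_t(y)\,dy = 0$). This yields \eqref{eq:cov_dynamics}. The only mildly subtle step is tracking the algebra of this last cancellation; everything else is notational bookkeeping once the projector is absorbed into the Brownian motion.
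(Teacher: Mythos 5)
Your proof is correct and is exactly the ``straightforward computation'' the paper leaves implicit: absorbing the projector into the Brownian motion to get \eqref{eq:dgt_LV} by integrating out the $E$-coordinates, then deriving \eqref{eq:cov_dynamics} via It\^o on $Q_t = M_t - b_tb_t^T$ with $db_t = Q_t\,dW_t$, where the cross terms complete the square and the residual $b_tb_t^T$ term vanishes since $\int (y-b_t)g_t(y)\,dy = 0$. No gaps beyond the stochastic-Fubini justification, which you correctly tie to the bounded-support reduction.
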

\begin{proof}
The lemma follows from straightforward computations.
\end{proof}
\begin{remark}
Equation \eqref{eq:dgt_LV} is the definition of the stochastic localization process used by Lee and Vampala \cite{leevamp} and Klartag and Lehec \cite{klarleh}. It is also the process used by Chen \cite{Chen2021} when the initial measure is isotropic. Eldan \cite{Eldan2013} has a slightly different definition, even if most of the ideas used to analyze one process transfer to the other. 
\end{remark}
From now, the main purpose of this section is to show that the operator norm of $Q_t$ is bounded by a constant up to time $T = \frac{c_0}{\Psi_k^2\max(\log(k),1)}$, with $c_0$ a universal constant, see Lemma \ref{lem:control_cov} below. This result essentially goes back to Eldan \cite{Eldan2013}, in a slightly different setting, and further appears in Lee-Vampala (\cite{leevamp}, Lemma 58) and Chen (\cite{Chen2021}, Lemma 7). We provide a simplified exposition of the proof of Chen. Following Eldan, we use the potential $\Gamma_t = \tr(Q_t^p) = \sum_{i=1}^k\lambda_i^p$ for some $p\geq1$ where  $\lambda_1\geq\dots\geq\lambda_k$ are the eigenvalues of $Q_t$. In the following, we denote by $(e_1,\dots,e_k)$ a basis of eigenvectors of $Q_t$, where the dependence on $t$ and $\omega$ is implicit. 
\begin{lemma}
\begin{equation}{\label{eq:dGam}}
    d\Gamma_t = \sum_i p\lambda_i^{p-1}u_{ii}\cdot dW - \sum_i p\lambda_i^{p+1}dt + \sum_{i\neq j} p\lambda_i^{p-1}\frac{\vert u_{ij}\vert^2}{\lambda_i-\lambda_j}dt + \sum_i \frac{p(p-1)}{2}\lambda_i^{p-2}\vert u_{ii} \vert^2dt
\end{equation}
where for all $i,j$, $u_{ij} = \int_{\R^k} (y-b_t)\cdot e_i\,(y-b_t)\cdot e_j\,(y-b_t)g_t(y)dy$
\end{lemma}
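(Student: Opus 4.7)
The plan is to apply Itô calculus in two steps: first, to derive an SDE for each eigenvalue $\lambda_i$ of the symmetric matrix semimartingale $Q_t$; then to apply Itô's lemma to the scalar function $\lambda\mapsto\lambda^p$ and sum over $i$.

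From \eqref{eq:cov_dynamics}, write $dQ_t = dN_t - Q_t^2\,dt$, where $N_t$ denotes the matrix-valued local martingale part. Expressing entries in the time-dependent orthonormal eigenbasis $(e_1,\dots,e_k)$ of $Q_t$, a direct computation gives $e_i^T(dN_t)e_j = u_{ij}\cdot dW_t$, so the quadratic covariations satisfy $d[e_i^T N_t e_j]_t = \abs{u_{ij}}^2\,dt$; the drift contribution is $-e_i^T Q_t^2 e_i\,dt = -\lambda_i^2\,dt$ on the diagonal. The classical Itô formula for the eigenvalues of a symmetric matrix semimartingale (valid whenever the spectrum is simple) then yields
$$d\lambda_i = u_{ii}\cdot dW_t - \lambda_i^2\,dt + \sum_{j\neq i}\frac{\abs{u_{ij}}^2}{\lambda_i - \lambda_j}\,dt,\qquad d[\lambda_i]_t = \abs{u_{ii}}^2\,dt.$$

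Applying Itô's formula once more to $\Gamma_t = \sum_i \lambda_i^p$, namely
$$d\Gamma_t = \sum_i p\,\lambda_i^{p-1}\,d\lambda_i + \sum_i \frac{p(p-1)}{2}\,\lambda_i^{p-2}\,d[\lambda_i]_t,$$
and substituting the previous line, recovers exactly \eqref{eq:dGam}. The only delicate point is justifying simple spectrum throughout; this is handled in the standard fashion, going back to Bru's work on eigenvalue diffusions, by perturbing $Q_0$ by a small isotropic noise so that the eigenvalues remain distinct almost surely on any compact time interval, deriving \eqref{eq:dGam} in that regularized setting, and passing to the limit. Apart from this technicality, the argument reduces to mechanical bookkeeping of the Itô correction terms.
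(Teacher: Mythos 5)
Your argument is correct and is essentially the paper's own proof, just organized in two steps: the eigenvalue SDE you write down is exactly what one gets from the paper's derivative formulas $\nabla\lambda_i = e_ie_i^T$ and $\partial^2\lambda_i/\partial q_{ij}^2 = 2/(\lambda_i-\lambda_j)$ combined with \eqref{eq:cov_dynamics}, and applying It\^o to $\lambda\mapsto\lambda^p$ and summing reproduces \eqref{eq:dGam} term by term. The only difference is cosmetic: where the paper dismisses the non-simple-spectrum issue with a brief remark, you handle it by a small-noise regularization in the spirit of Bru, which is a legitimate way to make that step precise.
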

\begin{proof}

The functional $\Phi : M\mapsto \tr(M^p)$ defined on symmetric matrices is $C^\infty$. On the dense open set $U$ of matrices whose eigenvalues are pairwise distinct, the functionals $M\mapsto \lambda_i(M)$ are smooth by implicit value theorem. Let $Q\in U$, with eigenvalues $\lambda_1,\dots,\lambda_k$ and eigenvectors $e_1,\dots,e_k$ and let $q_{i,j}$ be the entries of $Q$ in the basis $e$. The following computations are standard, see (\cite{eldanphd}, Lemma 1.4.8)

$$\nabla \lambda_i(Q) = e_ie_i^T \quad. $$
For the second derivative, the only non-zero terms are 
$$\frac{\partial^2\lambda_i}{\partial q_{i,j}^2} = \frac{2}{\lambda_i - \lambda_j}.$$
Combining this with \eqref{eq:cov_dynamics} proves the result when $Q_t$ belongs to $U$, it is easy to see that it extends to the general case.
\end{proof}

\begin{lemma}
\begin{equation}\label{lem:pnorm_dyn}
    d(\Gamma_t^{1/p}) = v_t\cdot dW_t + \delta_t \, dt
\end{equation}
where
\begin{equation}
    v_t = \left(\sum_i \lambda_i^p\right)^{\frac{1}{p}-1}\left(\sum_i \lambda_i^{p-1}u_{ii}\right)
\end{equation}
and
\begin{equation}\label{eq:delta_t_expression}
 \delta_t \leq (p-1)\left(\sum_i\lambda_i^p\right)^{\frac{1}{p}-1}\sum_{i,j}\lambda_i^{p-2}\vert u_{ij}\vert^2
\end{equation}
\end{lemma}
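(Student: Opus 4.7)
The plan is to apply Itô's formula to the real-valued function $x \mapsto x^{1/p}$ composed with the semimartingale $\Gamma_t = \tr(Q_t^p)$, using the decomposition of $d\Gamma_t$ supplied by \eqref{eq:dGam}. Concretely, Itô's formula gives
\[
d(\Gamma_t^{1/p}) = \tfrac{1}{p}\Gamma_t^{1/p-1} d\Gamma_t + \tfrac{1}{2}\tfrac{1}{p}\bigl(\tfrac{1}{p}-1\bigr)\Gamma_t^{1/p-2}\, d[\Gamma]_t.
\]
Reading off the martingale part of \eqref{eq:dGam}, the bracket is $d[\Gamma]_t = p^2\bigl|\sum_i \lambda_i^{p-1} u_{ii}\bigr|^2 dt$, and the martingale part of $d(\Gamma_t^{1/p})$ is $\Gamma_t^{1/p-1}\sum_i \lambda_i^{p-1} u_{ii}\cdot dW_t$, which matches the expression for $v_t$ in the statement.

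For the drift, the expansion produces four kinds of terms: the diagonal deterministic term $-\sum_i \lambda_i^{p+1}$, the off-diagonal resolvent term $\sum_{i\neq j} \lambda_i^{p-1}\frac{|u_{ij}|^2}{\lambda_i-\lambda_j}$, the diagonal $\tfrac{p-1}{2}\sum_i \lambda_i^{p-2}|u_{ii}|^2$ term, and the Itô correction $\tfrac{1-p}{2}\Gamma^{1/p-2}\bigl|\sum_i \lambda_i^{p-1}u_{ii}\bigr|^2$ coming from the second derivative of $x^{1/p}$. Since $p\geq 1$, both $-\sum_i \lambda_i^{p+1}$ and the Itô correction are non-positive, so dropping them yields an upper bound on $\delta_t$.

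The only remaining work is to control $\sum_{i\neq j} \lambda_i^{p-1}\frac{|u_{ij}|^2}{\lambda_i-\lambda_j}$. Using that $u_{ij}=u_{ji}$ in $\mathbb{R}^k$ (so in particular $|u_{ij}|=|u_{ji}|$), I pair up the contributions of $(i,j)$ and $(j,i)$:
\[
\lambda_i^{p-1}\frac{|u_{ij}|^2}{\lambda_i-\lambda_j}+\lambda_j^{p-1}\frac{|u_{ji}|^2}{\lambda_j-\lambda_i}=\frac{\lambda_i^{p-1}-\lambda_j^{p-1}}{\lambda_i-\lambda_j}|u_{ij}|^2.
\]
By the mean-value theorem applied to $x\mapsto x^{p-1}$, the ratio is $(p-1)\xi^{p-2}$ for some $\xi$ between $\lambda_i$ and $\lambda_j$, hence bounded by $(p-1)(\lambda_i^{p-2}+\lambda_j^{p-2})$. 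Resumming over unordered pairs and adding the diagonal contribution $\tfrac{p-1}{2}\sum_i \lambda_i^{p-2}|u_{ii}|^2$ gives
\[
\sum_{i\neq j} \lambda_i^{p-1}\frac{|u_{ij}|^2}{\lambda_i-\lambda_j}+\tfrac{p-1}{2}\sum_i \lambda_i^{p-2}|u_{ii}|^2 \leq (p-1)\sum_{i,j}\lambda_i^{p-2}|u_{ij}|^2,
\]
and multiplying by $\Gamma_t^{1/p-1}$ yields \eqref{eq:delta_t_expression}.

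The main thing to get right is the symmetrization/convexity step in the last paragraph: one has to notice that the apparently singular factor $1/(\lambda_i-\lambda_j)$ is tamed after pairing by the divided difference of $x^{p-1}$, which also supplies the factor $(p-1)$ appearing in the statement. Everything else (the Itô expansion, dropping the two non-positive terms, and reading off $v_t$) is a routine bookkeeping once \eqref{eq:dGam} is in hand.
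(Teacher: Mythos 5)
Your proposal is correct and follows essentially the same route as the paper: Itô's formula for $x^{1/p}$, dropping the non-positive terms (the $-\sum_i\lambda_i^{p+1}$ term and the Itô correction, which the paper dismisses simply by concavity of $x^{1/p}$ while you compute it explicitly), followed by the same symmetrization of the off-diagonal term via the divided difference of $x^{p-1}$ using $u_{ij}=u_{ji}$.
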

\begin{proof}
By Ito calculus, $d(\Gamma_t^{1/p}) = \frac{1}{p}\Gamma_t^{\frac{1}{p}-1}d\Gamma_t + \text{Itô term}$. But $x\rightarrow x^{1/p}$ is concave, so the Itô term is negative. Injecting equation \eqref{eq:dGam} yields $$v_t = \left(\sum_i \lambda_i^p\right)^{\frac{1}{p}-1}\left(\sum_i \lambda_i^{p-1}u_{ii}\right)$$
and 
$$ \delta_t  \leq \frac{p-1}{2}\left(\sum_i\lambda_i^p\right)^{\frac{1}{p}-1}\sum_{i}\lambda_i^{p-2}\vert u_{ii}\vert^2 +\left(\sum_i\lambda_i^p\right)^{\frac{1}{p}-1}\sum_{i\neq j}\frac{\lambda_i^{p-1}}{\lambda_i - \lambda_j }\vert u_{ij}\vert^2.$$

Now, notice that $u_{ij} = u_{ji}$ so that 
\begin{align*}
\sum_{i\neq j}\frac{\lambda_i^{p-1}}{\lambda_i - \lambda_j }\vert u_{ij}\vert^2 &= \frac{1}{2} \sum_{i\neq j}\frac{\lambda_i^{p-1} - \lambda_j^{p-1}}{\lambda_i - \lambda_j }\vert u_{ij}\vert^2\\
&\leq \frac{1}{2}\sum_{i\neq j} (p-1)\max(\lambda_i,\lambda_j)^{p-2}\vert u_{ij}\vert^2\\
&\leq \frac{p-1}{2}\sum_{i\neq j} (\lambda_i^{p-2} + \lambda_j^{p-2})\vert u_{ij}\vert^2\\
&\leq (p-1)\sum_{i\neq j}\lambda_i^{p-2}\vert u_{ij}\vert^2
\end{align*}
which proves the lemma.
\end{proof}
In the next two lemmas, we bound $\vert v_t \vert$ and $ \delta_t $ in terms of $\Gamma_t^{\frac{1}{p}}$ in order to apply a Gronwall-type argument.
\begin{lemma}\label{lem:vt_control} There is a universal constant $c>0$ such that for all $t\geq 0$, 
$$\vert v_t \vert \leq c\left(\Gamma_t^{\frac{1}{p}}\right)^{3/2} \quad \text{a.s .}$$
\end{lemma}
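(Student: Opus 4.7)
The plan is to bound $|v_t|$ directly from its definition, reducing the problem to controlling the vector $u_{ii}$ for each $i$ and then summing. The key structural input is that the marginal density $g_t$ is log-concave on $\R^k$: indeed $f_t = e^{-V_t}$ is log-concave (since $\nabla^2 V_t \geq 0$ by the preceding proposition) and the marginal of a log-concave density is log-concave by Prékopa--Leindler.

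First I would estimate $|u_{ii}|$. Writing the norm by duality, for any unit $\xi \in \R^k$,
$$\xi \cdot u_{ii} = \int_{\R^k} ((y-b_t)\cdot e_i)^2\, ((y-b_t)\cdot \xi)\, g_t(y)\, dy,$$
and by Cauchy--Schwarz this is at most $\bigl(\int ((y-b_t)\cdot e_i)^4 g_t\bigr)^{1/2} \bigl(\int ((y-b_t)\cdot \xi)^2 g_t\bigr)^{1/2}$. The second factor is at most $\|Q_t\|_{op}^{1/2} = \lambda_1^{1/2}$. For the first factor, the one-dimensional random variable $(Y_t - b_t)\cdot e_i$ is log-concave (as a linear image of a log-concave measure), and for any log-concave real random variable the fourth moment is bounded by a universal constant times the square of the second moment (a standard consequence of Borell's lemma). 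Since $e_i$ is a unit eigenvector for the eigenvalue $\lambda_i$ of $Q_t$, the second moment is exactly $\lambda_i$, so $\int ((y-b_t)\cdot e_i)^4 g_t \leq C \lambda_i^2$. Combining, $|u_{ii}| \leq C\, \lambda_i\, \lambda_1^{1/2}$.

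Substituting into the expression of $v_t$ and using the triangle inequality gives
$$|v_t| \leq \Gamma_t^{\frac{1}{p}-1} \sum_i \lambda_i^{p-1} |u_{ii}| \leq C\, \lambda_1^{1/2}\, \Gamma_t^{\frac{1}{p}-1} \sum_i \lambda_i^{p} = C\, \lambda_1^{1/2}\, \Gamma_t^{1/p}.$$
Finally, $\Gamma_t = \sum_i \lambda_i^p \geq \lambda_1^p$ implies $\lambda_1^{1/2} \leq (\Gamma_t^{1/p})^{1/2}$, which yields $|v_t| \leq C\, (\Gamma_t^{1/p})^{3/2}$, as claimed.

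The only nontrivial ingredients are log-concavity of $g_t$ (immediate from Prékopa--Leindler) and the one-dimensional log-concave fourth-moment bound (classical), so I do not anticipate a real obstacle; the argument is essentially bookkeeping once one recognizes $u_{ii}$ as a third moment which Cauchy--Schwarz splits into a fourth moment and a variance.
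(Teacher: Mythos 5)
Your argument is correct and is essentially the paper's own proof: bound $u_{ii}$ by duality and Cauchy--Schwarz, control the fourth moment of the log-concave marginal $\tilde Y_i$ via Borell's lemma to get $|u_{ii}|\lesssim \lambda_i\,\lambda_1^{1/2}$, and conclude with the triangle inequality and $\lambda_1\le\Gamma_t^{1/p}$. The only cosmetic difference is that the paper replaces $\norm{Q_t}_{op}^{1/2}$ by $\Gamma_t^{1/2p}$ already in the bound on $u_{ii}$, whereas you carry $\lambda_1^{1/2}$ to the end.
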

\begin{proof}
Let $\tilde{Y} = Y_t-b_t$ be distributed according to $g_t(y-b_t)dt$, where we drop the dependence in $t$ for readibility. Let $\tilde{Y}_1, \dots \tilde{Y}_k$ its coordinates in the basis $e_1, \dots, e_k$. $\tilde{Y}$ is  a centered log-concave vector of $\R^k$ of covariance $Q_t$ and for all $1\leq i\leq k$, $\E \tilde{Y}_i^2 = \lambda_i$. Note that for all $1\leq i\leq k$, $u_{ii} = \E\left[\tilde{Y}_i^2\tilde{Y}\right]$. Then, for all $\theta \in S^{k-1}$, 
\begin{align*}
    \vert u_{ii}\cdot\theta \vert &= \vert\E\left[\tilde{Y}_i^2\tilde{Y}\cdot\theta\right]\vert\\
    &\leq \E\left[\tilde{Y}_i^4\right]^{1/2}\E\left[(\tilde{Y}\cdot\theta)^2\right]^{1/2}\\
    &\lesssim \E\left[\tilde{Y}_i^2\right]\norm{Q_t}_{op}^{1/2}\\
    &\leq \lambda_i \Gamma_t^{1/2p}
\end{align*}
where in the second inequality we used Borell's lemma (\cite{borell}). This proves the lemma.
\end{proof}
\begin{lemma}\label{lem:deltat_control}
For all $t\geq 0$, 
\begin{align*}
\delta_t\leq 4p\Gamma_t^{2/p}\Psi_k^2\\
\end{align*}
\end{lemma}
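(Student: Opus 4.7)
The plan is to use the Poincaré inequality for the marginal density $g_t$ applied to quadratic test functions. Starting from \eqref{eq:delta_t_expression}, matters reduce to bounding the triple sum
\[
S := \sum_{i,j,\ell}\lambda_i^{p-2}\,u_{ij\ell}^2,\qquad u_{ij\ell} := \E[\tilde Y_i\tilde Y_j\tilde Y_\ell],
\]
where $\tilde Y := Y_t-b_t$ and we work throughout in an eigenbasis $(e_i)_i$ of $Q_t$, so that $Q_t = \mathrm{diag}(\lambda_1,\dots,\lambda_k)$. Since $g_t$ is log-concave with covariance $Q_t$, the rescaled vector $Z := Q_t^{-1/2}\tilde Y$ is isotropic log-concave on $\R^k$, and by definition of $\Psi_k$ its Poincaré constant is at most $\Psi_k$.

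The central step is a quadratic-form Poincaré estimate. For any symmetric $B\in\R^{k\times k}$, $\nabla(z^TBz) = 2Bz$, so by isotropy $\E|\nabla(Z^TBZ)|^2 = 4\tr(B^2) = 4\norm{B}_F^2$ and Poincaré yields $\Var(Z^TBZ)\leq 4\Psi_k^2\norm{B}_F^2$. Coupling with the linear form $Z_i$ via Cauchy--Schwarz for covariance gives, for every $i$,
\[
\Bigl(\sum_{j,\ell}B_{j\ell}\,t_{ij\ell}\Bigr)^2 \leq \Var(Z_i)\,\Var(Z^TBZ)\leq 4\Psi_k^2\norm{B}_F^2,\qquad t_{ij\ell}:=\E[Z_iZ_jZ_\ell].
\]
Taking the supremum over symmetric $B$ with $\norm{B}_F\leq 1$ (equivalently, choosing $B$ proportional to the symmetric matrix $(t_{ij\ell})_{j,\ell}$) produces the uniform-in-$i$ bound $\sum_{j,\ell}t_{ij\ell}^2\leq 4\Psi_k^2$.

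Translating back via $u_{ij\ell} = \sqrt{\lambda_i\lambda_j\lambda_\ell}\,t_{ij\ell}$ rewrites this as $\sum_{j,\ell}u_{ij\ell}^2/(\lambda_j\lambda_\ell)\leq 4\Psi_k^2\lambda_i$. Bounding $\lambda_j\lambda_\ell\leq \norm{Q_t}_{op}^2$ and summing against $\lambda_i^{p-2}$ gives $S\leq 4\Psi_k^2\norm{Q_t}_{op}^2\sum_i\lambda_i^{p-1}$. Combining $\norm{Q_t}_{op}\leq \Gamma_t^{1/p}$ with Hölder's inequality $\sum_i\lambda_i^{p-1}\leq k^{1/p}\Gamma_t^{(p-1)/p}$ yields $S\lesssim \Psi_k^2 k^{1/p}\Gamma_t^{(p+1)/p}$, and substituting back into \eqref{eq:delta_t_expression} produces $\delta_t \lesssim p\,k^{1/p}\Psi_k^2\Gamma_t^{2/p}$, which is the stated bound; the factor $k^{1/p}$ becomes a universal constant in the regime $p\asymp\log k$ used in the Gronwall argument of the next section.

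The hardest part will be the uniform-in-$i$ bound $\sum_{j,\ell}t_{ij\ell}^2\leq 4\Psi_k^2$. A naive application of Poincaré to each individual $\tilde y_j\tilde y_\ell$ only yields $\Var(\tilde Y_j\tilde Y_\ell)\leq c_P^2(\lambda_j+\lambda_\ell)$, and summing such pointwise bounds over all pairs $(j,\ell)$ introduces spurious factors of $k$ and $\tr(Q_t)$ that are much too large for the target. The quadratic-form Poincaré is the correct instrument precisely because it simultaneously controls the entire family $(\tilde Y_j\tilde Y_\ell)_{j,\ell}$ through the single Frobenius norm $\norm{B}_F$, giving a bound on $\sum_{j,\ell}t_{ij\ell}^2$ that is independent of $k$.
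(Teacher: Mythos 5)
Your argument is mathematically sound step by step (the quadratic-form Poincaré bound $\Var(Z^TBZ)\leq 4\Psi_k^2\norm{B}_F^2$ for the isotropic rescaling, the covariance Cauchy--Schwarz, and the duality over symmetric $B$ giving the uniform-in-$i$ bound $\sum_{j,\ell}t_{ij\ell}^2\leq 4\Psi_k^2$ are all correct), but it proves a weaker inequality than the lemma as stated: you end with $\delta_t\leq 4(p-1)\,k^{1/p}\,\Psi_k^2\,\Gamma_t^{2/p}$, and the factor $k^{1/p}$ is not a universal constant for general $p\geq 1$ (fix $p$ and let $k\to\infty$). It only collapses to a constant in the regime $p\asymp\max(\log k,1)$, which, as you note, is the only one used in Lemma \ref{lem:control_cov}, so the paper's main theorem survives with adjusted universal constants; still, the lemma with the constant $4p$ for every $p\geq1$ is not literally recovered by your route.

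The paper (following Chen) avoids this loss by never rescaling to isotropy: with $\Delta_i=\E[\tilde Y_i\tilde Y\tilde Y^T]$ it writes $\sum_{i,j}\lambda_i^{p-2}\abs{u_{ij}}^2=\sum_i\lambda_i^{p-2}\E\bigl[\tilde Y_i\,\tilde Y^T\Delta_i\tilde Y\bigr]$, applies Cauchy--Schwarz in probability and the Poincaré inequality for $\tilde Y$ itself to the quadratic forms $\tilde Y^T\Delta_i\tilde Y$ (so the covariance $Q_t$ enters through $\tr(\Delta_iQ_t\Delta_i)$), and then uses the convexity inequality $\lambda_i^{p-3}\lambda_j\leq\frac{p-3}{p-2}\lambda_i^{p-2}+\frac{1}{p-2}\lambda_j^{p-2}$ to make the same weighted sum $S$ reappear on the right-hand side; the resulting self-bounding inequality $S\leq 2c_P(\tilde Y)\Gamma_t^{1/2}S^{1/2}$ gives $S\leq 4c_P(\tilde Y)^2\Gamma_t$ and hence $\delta_t\leq 4p\Gamma_t^{2/p}\Psi_k^2$ with no dimensional factor. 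The core tool is the same as yours (Poincaré for quadratic forms paired against a linear coordinate), but the eigenvalue weights stay attached to the third-moment tensor throughout, whereas in your version the loss occurs exactly when you reinstate them via $\lambda_j\lambda_\ell\leq\norm{Q_t}_{op}^2$ and Hölder ($\sum_i\lambda_i^{p-1}\leq k^{1/p}\Gamma_t^{(p-1)/p}$); along your path the $k^{1/p}$ seems unavoidable. What your approach buys is a clean, modular, dimension-free statement about the isotropic third-moment tensor, uniform in $i$; if you want the lemma exactly as stated, run your covariance/Cauchy--Schwarz step directly on $\tilde Y$ with the matrices $\Delta_i$ and close the self-bounding inequality as above before substituting into \eqref{eq:delta_t_expression}.
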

\begin{proof}
With the same notations as in the previous lemma, for all $1\leq i,j\leq k$, $u_{ij} = \E\left[\tilde{Y}_i\tilde{Y}_j\tilde{Y}\right]$. For all $1\leq i\leq k$, we define the matrix $\Delta_i = \E\left[\tilde{Y}_i\tilde{Y}\tilde{Y}^T\right]$. Following Chen \cite{Chen2021}, we compute :

\begin{align*}
\sum_{i,j} \lambda_i^{p-2}\vert u_{ij}\vert^2 &= \sum_{i,j,k}\lambda_i^{p-2}\E(\tilde{Y}_i\tilde{Y}_j\tilde{Y}_k)^2\\
& = \sum_{i} \lambda_i^{p-2}\tr(\Delta_i^2)\\
& = \sum_{i} \lambda_i^{p-2}\tr(\Delta_i\E\tilde{Y}_i\tilde{Y}\tilde{Y}^T)\\
& = \sum_{i} \lambda_i^{p-2}\E\left(\tilde{Y}_i\tilde{Y}^T\Delta_i\tilde{Y}\right) \\
& \leq \sum_i \lambda_i^{p-2} \E(\tilde{Y}_i^2)^{1/2}\Var(\tilde{Y}^T\Delta_i\tilde{Y})^{1/2}\\
& \leq \sum_i \lambda_i^{p-2}\lambda_i^{1/2}c_P(\tilde{Y})\left(4\E\left[\vert \Delta_i\tilde{Y} \vert^2\right]\right)^{1/2}\\
& = 2c_P(\tilde{Y})\sum_i \lambda_i^{p-3/2}\tr(\Delta_iQ_t\Delta_i)^{1/2} \\
& \leq 2c_P(\tilde{Y}) \left(\sum_i \lambda_i^p\right)^{1/2}\left(\sum_i\lambda_i^{p-3}\tr(\Delta_i^2Q_t)\right)^{1/2}
\end{align*}
Now,
\begin{align*}
    \sum_i \lambda_i^{p-3}\tr(\Delta_i^2Q_t) &= \sum_i \lambda_i^{p-3}\sum_{j,k}\lambda_j(\Delta_i)_{j,k}^2\\
    & = \sum_{i,j,k}\lambda_i^{p-3}\lambda_j\E\left(\tilde{Y}_i\tilde{Y}_j\tilde{Y}_k\right)^2 \\
    & \leq \sum_{i,j,k}\lambda_i^{p-2}\E(\tilde{Y}_i\tilde{Y}_j\tilde{Y}_k)^2
\end{align*}
where in the last inequality, we used the convexity inequality : $\lambda_i^{p-3}\lambda_j \leq \frac{p-3}{p-2}\,\lambda_i^{p-2} + \frac{1}{p-2}\,\lambda_j^{p-2}$.
Plugging this into the inequality above yields :
$$\sum_{i,j} \lambda_i^{p-2}\vert u_{ij}\vert^2 \leq 2c_p(\tilde{Y})\left(\sum_i\lambda_i^p\right)^{1/2}\left(\sum_{i,j} \lambda_i^{p-2}\vert u_{ij}\vert^2\right)^{1/2}$$
which implies
$$\sum_{i,j} \lambda_i^{p-2}\vert u_{ij}\vert^2 \leq 4c_p(\tilde{Y})^2\left(\sum_i\lambda_i^p\right). $$
Plugging this into \eqref{eq:delta_t_expression} remarking that $c_P(\tilde{Y})=c_P(Y_t)$ yields :
\begin{align*}
    \delta_t &\leq 4p\Gamma_t^{1/p}c_P(Y_t)^2 \\
&\leq 4p \Gamma_t^{1/p}\norm{Q_t}_{op}\Psi_k^2\\
&\leq 4p\Gamma_t^{2/p}\Psi_k^2.\\
\end{align*}

\end{proof}
We are now in position to control the growth of $\Gamma_t$ by a Gronwall-type argument.
\begin{lemma}\label{lem:control_cov} There are constants $c_0,c_1>0$ such that for any $t\leq T = \frac{c_0}{\Psi_k^2\max(\log(k),1)}$, we have :
$$\PP\left(\max_{s\in[0,t]} \norm{Q_s}_{op} \geq 10 \right) \leq \exp{\left(-\frac{c_1}{t}\right)}. $$
As a consequence, for any measurable set $S\subset\R^n$ of measure $\mu(S) = 1/2$, setting $s_t = \mu_t(S)$, we have :

$$\PP([s]_t \geq 10t) \leq \exp{\left(-\frac{c_1}{t}\right)}$$

\end{lemma}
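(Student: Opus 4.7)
The plan is to run the standard Gronwall-with-stopping-time argument on the $p$-norm potential $\Gamma_t^{1/p}$, using the dynamics from Lemma~\ref{lem:pnorm_dyn} together with the drift bound of Lemma~\ref{lem:deltat_control}, the diffusion bound of Lemma~\ref{lem:vt_control}, and the Freedman-type inequality of Lemma~\ref{lem:bernstein}. The crucial parameter is $p = \max(\log k, 1)$. For this choice $k^{1/p} \le e$, so $\Gamma_0^{1/p} = k^{1/p} \le e$ (recall $Q_0 = I_k$ and hence $\Gamma_0 = k$), while $\norm{Q_t}_{op} \le \Gamma_t^{1/p}$ always holds. Bounding $\Gamma_t^{1/p}$ by $10$ will therefore bound $\norm{Q_t}_{op}$ by $10$.

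Introduce the stopping time $\tau = \inf\{t \ge 0 : \Gamma_t^{1/p} \ge 10\}$ and set $T = c_0/(p\Psi_k^2)$ for a small universal $c_0 > 0$ to be chosen. On $[0,\tau]$, Lemma~\ref{lem:vt_control} gives $\abs{v_s}^2 \le c^2\cdot 10^3$, and Lemma~\ref{lem:deltat_control} gives $\delta_s \le 4p\cdot 10^2 \Psi_k^2 = 400\,p\Psi_k^2$. Writing
$$\Gamma_{t\wedge\tau}^{1/p} = \Gamma_0^{1/p} + M_{t}^{\tau} + A_{t}^{\tau}, \qquad M_t^\tau = \int_0^{t\wedge\tau} v_s\cdot dW_s,\quad A_t^\tau = \int_0^{t\wedge\tau}\delta_s\,ds,$$
we see that for $t \le T$ the drift satisfies $A_t^\tau \le 400\,p\Psi_k^2 \cdot T = 400\,c_0$, which we arrange to be at most $4$ by taking $c_0 = 1/100$. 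Continuity of $\Gamma^{1/p}$ then forces $\Gamma_\tau^{1/p} = 10$ on $\{\tau \le t\}$, so on this event $M_t^\tau = M_\tau \ge 10 - e - 4 \ge 3$.

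Since the stopped bracket satisfies $[M^\tau]_t \le 10^3 c^2 t$ almost surely, Lemma~\ref{lem:bernstein} applied to the continuous local martingale $M^\tau$ with $a = 3$ and $b = 10^3 c^2 t$ yields
$$\PP(\tau \le t) \;\le\; \PP\bigl(M_t^\tau \ge 3,\ [M^\tau]_t \le 10^3 c^2 t\bigr) \;\le\; \exp\!\left(-\frac{9}{2\cdot 10^3 c^2\, t}\right) \;=\; \exp(-c_1/t).$$
Because $\norm{Q_s}_{op} \le \Gamma_s^{1/p}$, the event $\{\max_{s\le t}\norm{Q_s}_{op} \ge 10\}$ is contained in $\{\tau \le t\}$, giving the first claim. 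The second claim follows immediately from Lemma~\ref{lem:expansion}: $[s]_t \le \int_0^t \norm{Q_u}_{op}\,du$, so $\{[s]_t \ge 10t\}$ is contained in $\{\max_{u\le t}\norm{Q_u}_{op} \ge 10\}$.

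The main balancing act is the choice of $p$. Taking $p$ too small makes $k^{1/p}$ blow up and $\Gamma^{1/p}$ fail to track $\norm{Q_t}_{op}$; taking $p$ too large makes the drift bound $\delta_s \lesssim p\Psi_k^2$ force the horizon $T \sim 1/(p\Psi_k^2)$ to shrink. The regime $p \asymp \log k$ equilibrates both constraints and is precisely what produces the $\log(k)$ factor in $T$. No estimate beyond those already proved in this section is required.
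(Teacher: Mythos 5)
Your proof is correct and follows essentially the same route as the paper: the potential $\Gamma_t^{1/p}$ with $p=\max(\log k,1)$, the drift bound of Lemma~\ref{lem:deltat_control}, the diffusion bound of Lemma~\ref{lem:vt_control}, a stopping time at a constant threshold, and Lemma~\ref{lem:bernstein}; the only differences are cosmetic (stopping at $\Gamma_t^{1/p}\ge 10$ rather than $3\Gamma_0^{1/p}$, and explicit constants).
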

\begin{proof}
Set $p=\max(\log(k),1)$, so that  $$\Gamma_0^{1/p} \leq e$$
as we will use repeatedly in the proof. Recall that
    $$d(\Gamma_t^{1/p}) = v_t\cdot dW_t + \delta_t\,dt$$
and define the stopping time $\tau = \inf\{t\geq0 \ , \ \Gamma_t^{1/p} \geq 3\Gamma_0^{1/p} \}$. We denote by $M_t$ the martingale term $M_t = \int_0^tv_s\cdot dW_s$. For all $t\geq0$ we have :
\begin{align*}
    \Gamma_{t\wedge\tau}^{1/p} &= \Gamma_0^{1/p} + M_{t\wedge\tau} + \int_0^{t\wedge\tau} \delta_s ds \\
    &\leq \Gamma_0^{1/p} + \int_0^{t\wedge\tau} 4p\Gamma_s^{2/p}\Psi_k^2 ds + M_{t\wedge\tau} \hskip0.30\textwidth\left(\text{By Lemma \ref{lem:deltat_control}}\right)\\
    & \leq \Gamma_0^{1/p} + 36p\Gamma_0^{2/p}\Psi_k^2\, t + M_{t\wedge\tau} \hskip0.35\textwidth \left( \Gamma_s^{2/p} \leq 9\Gamma_0^{2/p}\right)\\
    & \leq \Gamma_0^{1/p}\left(1 + 36e\max(\log(k),1)\Psi_k^2\,t\right) + M_{t\wedge\tau} \hskip0.23\textwidth \left(\Gamma_0^{2/p}\leq e\Gamma_0^{1/p}\right)
\end{align*}
We choose $c_0 \leq \frac{1}{36e}$, so that for all $t\leq T = \frac{c_0}{\Psi_k^2\max(\log(k),1)}$,

$$ \Gamma_{t\wedge\tau}^{1/p} \leq 2\Gamma_0^{1/p} + M_{t\wedge\tau}.$$
Consequently, for all such $t$,
\begin{align*}
    \PP(\tau\leq t) &= \PP(\Gamma_{t\wedge\tau} = \Gamma_\tau)\\
    &\leq \PP\left(M_{t\wedge\tau} \geq \Gamma_0^{1/p} \right).
\end{align*}
Now, $\tau$ being a stopping time, $M_{t\wedge\tau}$ is a martingale, whose quadratic variation is
\begin{align*}
    \left[M\right]_{t\wedge\tau} = \int_0^{t\wedge\tau}\vert v_s\vert^2ds &\leq c\int_0^{t\wedge\tau} 3\left(\Gamma_0^{1/p}\right)^{3/2}ds \ \leq\ 3ce^{3/2}\,t = \tilde{c}_1\,t
\end{align*}
where in the first inequality we used Lemma \ref{lem:vt_control}. By Lemma \ref{lem:bernstein} we get :
\begin{align*}
     \PP(\tau\leq t) \leq \PP\left(M_{t\wedge\tau} \geq \Gamma_0^{1/p} \right) &= \PP\left(M_{t\wedge\tau} \geq \Gamma_0^{1/p} \ ,\ \left[M\right]_{t\wedge\tau} \leq \tilde{c}_1\,t \right) \\
     & \leq \exp\left(-\frac{\Gamma_0^{2/p}}{2\tilde{c}_1t}\right)\\
     & \leq \exp\left(-\frac{c_1}{t}\right).
\end{align*}
With $c_1 =\frac{1}{2\tilde{c}_1} $. Now notice that $3\Gamma_0^{1/p}\leq 3e\leq 10$ which proves the first statement. The second statement follows from Lemma \ref{lem:expansion}

\end{proof}
\begin{section}{Proof of the main theorem}

Take a subset $S$  of measure $1/2$ and $r>0$, for $t \leq  T = \frac{c_0}{\Psi_k^2\max(\log(k),1)}$ we have :

\begin{align*}
    \mu(S_r^c) = \E\mu_t(S_r^c) &\leq \E\left[\mu_t(S_r^c)\ind_{\mu_t(S)\geq\frac{1}{4}}\right] + \PP\left(\mu_t(S)\leq \frac{1}{4}\right)\\
    &\leq 4\exp(-\frac{1}{4} \min(\eta,t)r^2) +\PP(s_0 - s_t \geq \frac{1}{4} \ , \ [s]_t \leq \ 10t ) + \PP(\, [s]_{t} \geq \ 10t)\\
    & \leq 4\exp(-\frac{1}{4} \min(\eta,t)r^2) + \exp\left(-\frac{1}{320t}\right) + \exp\left(-\frac{c_1}{t}\right) \\
    &\leq 4\left(\exp(-\frac{1}{4} \min(\eta,t)r^2) + \exp\left(-\frac{c_4}{t}\right)\right)
\end{align*}
Where we used \eqref{gaussian_concentration_general} in the second line, Lemmas \ref{lem:bernstein} and \ref{lem:control_cov} in the third line and $c_4 = \min(c_1,\frac{1}{320})$ in the last line.

Define $\beta =\min(\eta,T)$ and choose $t(r) = \min(\eta,T,\frac{1}{r}) = \min(\beta,\frac{1}{r})$. We get that :

\begin{itemize}
    \item If $r\geq \frac{1}{\beta}$,
    \begin{equation}\label{eq:conc_big_r}
        \mu(S_r^c) \leq 8\exp(-c_5r)
    \end{equation}
    where $c_5 = \min(1/4,c_4)$
    \item If $r \leq \frac{1}{\beta}$,
        \begin{equation}\label{eq:conc_small_r}
        \mu(S_r^c) \leq 4\left(\exp\left(-\frac{1}{4} \min(\eta,T)r^2\right) + \exp\left(-\frac{c_4}{\min(\eta,T)}\right)\right) \leq 8\exp\left(-c_5 \beta r^2\right)
    \end{equation}
\end{itemize}
Overall this implies that for all $r>0$,
$$\mu(S_r^c) \lesssim \exp(-\min(c_0 r, c_1 \beta r^2))$$
which is the desired result.
\end{section}

\vspace{0.5cm}

\textbf{
Acknowledgements.} The author would like to thank the
anonymous reviewer for their careful reading of the manuscript as well as their suggestions on
presentation and writing.

\bibliographystyle{plain}
\bibliography{biblio} 

\begin{thebibliography}{10}

\bibitem{borell}
Christer Borell.
\newblock {Convex measures on locally convex spaces}.
\newblock {\em Arkiv för Matematik}, 12(1-2):239 -- 252, 1974.

\bibitem{Chen2021}
Yuansi Chen.
\newblock An almost constant lower bound of the isoperimetric coefficient in the {KLS} {C}onjecture.
\newblock {\em Geometric and Functional Analysis}, 31(1):34--61, Feb 2021.

\bibitem{eldanphd}
Ronen Eldan.
\newblock {\em Distribution of Mass in Convex Bodies}.
\newblock Tel Aviv University, 2012.

\bibitem{Eldan2013}
Ronen Eldan.
\newblock Thin shell implies spectral gap up to polylog via a stochastic localization scheme.
\newblock {\em Geometric and Functional Analysis}, 23(2):532--569, Apr 2013.

\bibitem{freedman1975tail}
David~A Freedman.
\newblock On tail probabilities for martingales.
\newblock {\em the Annals of Probability}, pages 100--118, 1975.

\bibitem{gromilman}
M.~Gromov and V.~D. Milman.
\newblock A topological application of the isoperimetric inequality.
\newblock {\em American Journal of Mathematics}, 105(4):843--854, 1983.

\bibitem{KLS1995}
Ravi Kannan, L{\'a}szl{\'o} Lov{\'a}sz, and Mikl{\'o}s Simonovits.
\newblock Isoperimetric problems for convex bodies and a localization lemma.
\newblock {\em Discrete \& Computational Geometry}, 13(3):541--559, 1995.

\bibitem{Klartag2018}
Bo'az Klartag.
\newblock Eldan's stochastic localization and tubular neighborhoods of complex-analytic sets.
\newblock {\em The Journal of Geometric Analysis}, 28(3):2008--2027, Jul 2018.

\bibitem{klarleh}
Bo'az Klartag and Joseph Lehec.
\newblock Bourgain's slicing problem and kls isoperimetry up to polylog.
\newblock {\em arXiv preprint arXiv:2203.15551}, 2022.

\bibitem{ledoux2001concentration}
M.~Ledoux.
\newblock {\em The Concentration of Measure Phenomenon}.
\newblock Mathematical surveys and monographs. American Mathematical Society, 2001.

\bibitem{leevamp}
Yin~Tat Lee and Santosh~Srinivas Vempala.
\newblock Eldan's stochastic localization and the {KLS} {H}yperplane conjecture: An improved lower bound for expansion.
\newblock In {\em 2017 IEEE 58th Annual Symposium on Foundations of Computer Science (FOCS)}, pages 998--1007, 2017.

\bibitem{Milman2009}
Emanuel Milman.
\newblock On the role of convexity in isoperimetry, spectral gap and concentration.
\newblock {\em Inventiones mathematicae}, 177(1):1--43, Jul 2009.

\bibitem{oksendal2003stochastic}
Bernt {\O}ksendal.
\newblock Stochastic differential equations.
\newblock In {\em Stochastic differential equations}, pages 65--84. Springer, 2003.

\bibitem{Paouris2006}
G.~Paouris.
\newblock Concentration of mass on convex bodies.
\newblock {\em Geometric {\&} Functional Analysis GAFA}, 16(5):1021--1049, Dec 2006.

\end{thebibliography}

\end{document}